\documentclass[12pt]{amsart}

\usepackage{latexsym,amsmath,amssymb,amsfonts,amsthm,mathrsfs}
\usepackage{graphicx}
\usepackage{cite}
\usepackage{enumitem}
\usepackage{enumerate}
\usepackage{epstopdf}
\usepackage{epsfig}
\usepackage{subfigure}
\usepackage{tikz}
\usepackage{caption}
\usepackage{hyperref}

\newtheorem{thm}{Theorem}[section]
\newtheorem{lem}{Lemma}[section]
\newtheorem{cor}{Corollary}[section]
\newtheorem{prop}{Proposition}[section]

\theoremstyle{definition}

\newtheorem{remark}{Remark}[section]

\numberwithin{equation}{section}
\newcommand{\RNum}[1]{\uppercase\expandafter{\romannumeral #1\relax}}

\begin{document}

\title[Variations of colored multiset Eulerian polynomials and applications]
{Variations of colored multiset Eulerian polynomials and applications}

\author{Xue~Yan}
\address{School of Mathematical Sciences\\
Qufu Normal University\\
Qufu 273165, PR China}
\email{yanxueyx@126.com}

\date{\today}
\thanks{ \textit{Mathematics Subject
Classifications}: 52B20, 05A05, 05A15, 26C10}
\thanks{ \textit{Key words and phrases}.
Colored multiset permutation, Eulerian polynomial, real-rootedness, interlacing, Ehrhart polynomial, $h^\ast$-polynomial, half-open lattice polytopes.
}

\begin{abstract}
Deligeorgaki, Han and Solus introduced colored multiset Eulerian polynomials,
derived a generating function identity which generalizes MacMahon's identity,
proved their self-interlacing under suitable parameter conditions,
and identified these polynomials as the \(h^*\)-polynomials of direct products of dilated lattice simplices.
In this paper,
we introduce an ascent analogue of the colored multiset Eulerian polynomial,
derive an explicit generating function identity for this polynomial,
show that it is equal to the $h^*$-polynomial of a family of half-open lattice polytopes,
and verify that this ascent polynomial also satisfies self-interlacing under the same parameter conditions.
By establishing recurrence relations,
we prove that both polynomials are real-rooted for all positive integer parameters.
The obtained identities are further applied to interpret combinatorially the $h^*$-polynomials of Pitman--Stanley polytopes,
composition polytopes and a family of reflexive lattice polytopes defined from preorders.
\end{abstract}

\maketitle

\section{Introduction}
\hspace*{\parindent}
Let $\mathfrak{S}_d$ be the symmetric group on the set $[d]= \{1,2,\dots,d\}$.
An element $\pi$ of $\mathfrak{S}_d$ is represented by the word $\pi_1\pi_2\cdots\pi_d $,
with $\pi(i)=\pi_i$ for $i\in [d]$.
Let
\begin{align*}
\operatorname{asc}(\pi) &= \#\{i\in[d-1]:\pi_i <\pi_{i+1}\},\\
\operatorname{des}(\pi) &= \#\{i\in[d-1]:\pi_i >\pi_{i+1}\}
\end{align*}
denote the number of \emph{ascents} and \emph{descents} of $\pi$,
respectively.
It is well known that the statistics $\operatorname{asc}(\pi)$ and $\operatorname{des}(\pi)$ are equidistributed over $\mathfrak{S}_d$.
Their common generating function is the \emph{Eulerian polynomial}~\cite{FS70,Pet15},
defined by
\[
A_d(x)=\sum_{\pi\in\mathfrak{S}_d}x^{\operatorname{asc}(\pi)}
=\sum_{\pi\in\mathfrak{S}_d}x^{\operatorname{des}(\pi)}.
\]
It satisfies the recurrence relation
\[
A_d(x)=\big(1+(d-1)x\big)A_{d-1}(x)+x(1-x)A_{d-1}'(x)
\]
with the initial condition $A_0(x)=1$ (see Comtet~\cite[Exercise VII. 3]{Com74}, for instance).
The Eulerian polynomial has only real roots for all integers $d\ge 1$~\cite{Fro10}.
Hence,
its coefficients are unimodal and log-concave.
These properties have been extended in many directions,
including colored permutations~\cite{Ste92, Ste94},
multiset permutations~\cite{Mac04,Sim84},
signed multiset permutations~\cite{Lin15},
and various combinations thereof.

Recently, Deligeorgaki, Han and Solus \cite{DHS26} studied colored multiset permutations and introduced \emph{the colored multiset Eulerian polynomial}
\[
A_{\mathbf{m}}^{\mathbf{r}}(x)=\sum_{\pi^{\mathbf{c}}\in\mathfrak{S}_{\mathbf{m}}^{\mathbf{r}}}x^{\operatorname{des}(\pi^{\mathbf{c}})}.
\]
Here \(\mathbf{m}=(m_1,\dots,m_d)\in\mathbb{Z}_{>0}^d\) records the multiplicities of the letters \(1,\dots,d\),
with $m=m_1+\cdots+m_d$,
and \(\mathbf{r}=(r_1,\dots,r_d)\in\mathbb{Z}_{>0}^d\) records the number of available colors for each letter. The set \(\mathfrak{S}_{\mathbf{m}}^{\mathbf{r}}\) consists of words \(\pi^{\mathbf{c}}=\pi_1^{c_1}\cdots\pi_m^{c_m}\) in which each \(k\in [d]\) appears \(m_k\) times and each entry carries a color \(c_i\in[r_{\pi_i}]\).
We append the sentinel element $(d+1)^1$ to the end of each such word,
i.e., $\pi_{m+1}^{c_{m+1}}=(d+1)^1.$
Comparisons are taken with respect to the \emph{color order}~\cite{Ste94}:
\[
1^1<2^1<\cdots<d^1<(d+1)^1
<1^2<2^2<\cdots<d^2<1^3<\cdots.
\]
For any index $j\in[m]$,
we say that $j$ is a \emph{descent} of $\pi^{\mathbf{c}}$ if
\(
\pi_j^{c_j}>\pi_{j+1}^{c_{j+1}}.
\)
We denote by $\operatorname{des}(\pi^{\mathbf{c}})$ the number of descents of $\pi^{\mathbf{c}}$.
For a different descent statistic on colored multiset permutations,
see~\cite[Definition~5.1]{Tie26}.

The colored multiset Eulerian polynomial $A_{\mathbf{m}}^{\mathbf{r}}(x)$ generalizes the classical Eulerian polynomials
(when $\mathbf m=\mathbf r=\mathbf 1$),
MacMahon's multiset Eulerian polynomials
(when $\mathbf r=\mathbf 1$),
Lin's signed multiset Eulerian polynomials
(when $\mathbf r=2\mathbf 1$),
and Steingr\'{\i}msson's colored Eulerian polynomials
(when $\mathbf m=\mathbf 1$ and $\mathbf r=r\mathbf 1$)
(see, for instance~\cite{DHS26, Lin15, Mac04, Ste94}).

Moreover, Deligeorgaki, Han and Solus established the generating function identity~\cite[Corollary 2.2]{DHS26}
\begin{equation}\label{A1}
\frac{A_{\mathbf{m}}^{\mathbf{r}}(x)}{(1-x)^{m+1}}
= \sum_{n\ge 0}\prod_{i=1}^d\binom{r_i n+m_i}{m_i}x^n,
\end{equation}
which recovers MacMahon's identity and its signed analogue due to
Lin~\cite[Theorem~6]{Lin15}.
They further proved that, under the
condition \(r_j\ge m_j+1\) for all \(j\), the polynomials
\(A_{\mathbf m}^{\mathbf r}(x)\) are self-interlacing (see Section 2 for undefined terminology).
Consequently,
they are real-rooted, log-concave, unimodal, alternatingly increasing,
and bi-\(\gamma\)-positive.
In addition, they showed that
\begin{equation}\label{Ah*}
A_{\mathbf{m}}^{\mathbf{r}}(x) = h^*\bigl(P_{\mathbf{m}}^{\mathbf{r}};x\bigr),
\end{equation}
where $h^*(P;x)$ denotes the $h^*$-polynomial of a polytope $P$;
see Section~2 for its definition.
The polytope
\[
P_{\mathbf{m}}^{\mathbf{r}}:=\prod_{j=1}^d r_j\Delta_{m_j}\subset\mathbb{R}^m,\quad m=\sum_{j=1}^d m_j
\]
is the direct product of dilated standard simplices.
For every positive integer $s$, let
\[
\Delta_s
=
\left\{
\mathbf{x}\in\mathbb{R}_{\ge0}^{s}
:
\sum_{j=1}^{s}x_j\le1
\right\}
\]
denote the $s$-dimensional closed standard simplex.

Motivated by their work,
we investigate the corresponding ascent statistic.
We prepend the sentinel element $1^1$ to each such word,
i.e., $\pi_0^{c_0}=1^1$.
For any index $j\in[0,m-1]=\{0,1,\dots,m-1\}$,
we say that $j$ is an \emph{ascent} of $\pi^{\mathbf{c}}$ if
\(
\pi_j^{c_j}<\pi_{j+1}^{c_{j+1}}.
\)
We define
$\operatorname{ASC}(\pi^{\mathbf{c}})$ and
$\operatorname{asc}(\pi^{\mathbf{c}})$
as the set and the number of ascents of $\pi^{\mathbf{c}}$,
respectively.
In this paper,
we consider the corresponding colored multiset Eulerian polynomial defined by
\[
S_{\mathbf{m}}^{\mathbf{r}}(x)
=
\sum_{\pi^{\mathbf{c}}\in\mathfrak{S}_{\mathbf{m}}^{\mathbf{r}}}
x^{\operatorname{asc}(\pi^{\mathbf{c}})}.
\]
In contrast to ordinary (colored) permutations,
ascents and descents are no longer equidistributed under this model with the leading sentinel element.
Therefore,
the ascent generating polynomial $S_{\mathbf{m}}^{\mathbf{r}}(x)$ does not coincide with $A_{\mathbf{m}}^{\mathbf{r}}(x)$.

Our first main result establishes a generating function identity for $S_{\mathbf{m}}^{\mathbf{r}}(x)$.
\begin{thm}\label{thm:Sgf}
For all $\mathbf{m},\mathbf{r}\in\mathbb{Z}_{>0}^d$,
\begin{equation}\label{eq:Sgf}
\frac{S_{\mathbf{m}}^{\mathbf{r}}(x)}{(1-x)^{m+1}}
= \sum_{n \ge 0} \binom{r_1 n + m_1}{m_1} \prod_{i=2}^d \binom{r_i n + m_i - 1}{m_i} x^n.
\end{equation}
\end{thm}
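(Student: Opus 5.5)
We prove \eqref{eq:Sgf} by a "barred word" (MacMahon-type) argument, i.e.\ by computing the coefficient of $x^n$ on the right side as a count of pairs (word, weakly monotone labelling). Equivalently, we show that for each $n\ge 0$,
\[
\sum_{\pi^{\mathbf c}\in\mathfrak{S}_{\mathbf m}^{\mathbf r}}\binom{n-\operatorname{asc}(\pi^{\mathbf c})+m}{m}
=\binom{r_1 n+m_1}{m_1}\prod_{i=2}^d\binom{r_i n+m_i-1}{m_i},
\]
since $1/(1-x)^{m+1}=\sum_k\binom{k+m}{m}x^k$.

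The left side counts pairs $(\pi^{\mathbf c},\lambda)$, where $\lambda=(\lambda_1\ge\lambda_2\ge\cdots\ge\lambda_m)$ satisfies $n\ge\lambda_1$, $\lambda_m\ge 0$, and $\lambda_j>\lambda_{j+1}$ whenever $j\in\operatorname{ASC}(\pi^{\mathbf c})$. Here we set $\lambda_0=n$, so that an ascent at position $0$ forces $\lambda_1<n$. The standard shift argument shows that the number of such $\lambda$ is $\binom{n-\operatorname{asc}+m}{m}$: subtracting the number of ascents at or after each position turns the strict conditions into weak ones.

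For the right side, we attach to each entry $\pi_j^{c_j}$ the value $v_j=r_{\pi_j}\lambda_j+(c_j-1)$, or a suitably reversed version of it, chosen so that the color order is compatible with comparing values. Colors give the finer residue and $\lambda$ the coarse level, which mirrors how the color order places all color-$1$ letters before all color-$2$ letters. Grouping by letter $i$, the $m_i$ entries carrying letter $i$ are then determined by a multiset of size $m_i$ from a set of values. For letter $1$ this set has size $r_1n+1$, giving $\binom{r_1n+m_1}{m_1}$. For $i\ge 2$ it has size $r_in$, giving $\binom{r_in+m_i-1}{m_i}$. The missing value for $i\ge2$ comes from the sentinel $1^1$: an entry $i^1$ with $i>1$ immediately after the sentinel is an ascent, so its level $n$ is forbidden. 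For $1^1$ itself there is no ascent, so level $n$ with color $1$ stays allowed.

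Conversely, given the $d$ multisets of values, we reconstruct the word by sorting all $m$ entries by level $\lambda$ in decreasing order and, within a level, ordering them so that no ascent occurs inside the level (weakly decreasing in color order). This reconstruction must be unique and must yield exactly the admissible pairs. It reduces to checking that within a block of equal $\lambda$ the entries are forced into weakly decreasing color order, with equal entries indistinguishable because we work with multisets.

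I expect the main obstacle to be matching the bounds precisely. Converting $(\lambda_j,c_j)$ into a single value must make "no ascent inside a level" correspond to the correct ordering, given that the color order interleaves colors above letters and places the sentinel $d+1$ inside it. The boundary effect of $\pi_0=1^1$ must also remove exactly one value for each letter $i\ge2$ and none for letter $1$. I would first check $d=1$ and $d=2$ with small $r$ against \eqref{A1}, and then adjust the encoding, for example $v_j=r_{\pi_j}(n-\lambda_j)+(r_{\pi_j}-c_j)$ or similar. As a fallback, I would derive the identity from \eqref{A1}. That route uses the decomposition by the first letter, or a bijection removing the sentinel effect, relating $S_{\mathbf m}^{\mathbf r}$ to $A$-polynomials with modified multiplicities. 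It relies on the identity $\binom{r n+m-1}{m}=\binom{rn+m}{m}-\binom{rn+m-1}{m-1}$.
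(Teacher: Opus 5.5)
Your argument is the paper's barred-permutation proof written in $P$-partition language. If you set $\lambda_j=n-(\text{number of bars before }\pi_j)$, the paper's barred permutations with $n$ bars become exactly your pairs $(\pi^{\mathbf c},\lambda)$, and your levels $n,n-1,\ldots,0$ are the paper's $n+1$ spaces, read from left to right. Your reconstruction (sort by level, then weakly decreasingly in the color order) is exactly the paper's.

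The one step that needs fixing is the boundary count, which as written is wrong for $r_i\ge 2$.

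\begin{itemize}
\item With your encoding, letter $i$ has $r_i(n+1)$ slots $(\lambda,c)$, not $r_in+1$. Removing a single "missing value" $i^1$ at level $n$ therefore does not leave $r_in$.
\item The correct reason is that the entries at level $n$ form an initial block $\pi_1,\ldots,\pi_k$. This block must be weakly decreasing after the sentinel, $1^1\ge\pi_1\ge\cdots\ge\pi_k$.
\item Since $1^1$ is the minimum of the color order, every entry at level $n$ must equal $1^1$.
\item Hence each letter $i\ge 2$ loses all $r_i$ of its level-$n$ slots, leaving $r_in$. Letter $1$ keeps exactly one of its $r_1$ level-$n$ slots, giving $r_1n+1$.
\end{itemize}

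With this observation, the obstacle you anticipate disappears. You do not need a single-integer encoding compatible with the order, because the per-letter multisets of (level, color) slots already determine the word: within each level the arrangement is forced. The trailing sentinel $(d+1)^1$ plays no role in the ascent statistic, so you can also drop that concern and the fallback route.
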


This identity naturally leads to a geometric interpretation of the ascent polynomials as an $h^*$-polynomial.
\begin{thm}\label{Sgeo}
For all $\mathbf{m},\mathbf{r}\in\mathbb{Z}_{>0}^d$, define the polytope
\[
Q_{\mathbf{m}}^{\mathbf{r}}=r_1\Delta_{m_1}\times\prod_{i=2}^d r_i\widetilde{\Delta}_{m_i}\subset\mathbb{R}^m.
\]
Then
\[
S_{\mathbf{m}}^{\mathbf{r}}(x)=h^*\bigl(Q_{\mathbf{m}}^{\mathbf{r}};x\bigr),
\]
where $\Delta_s$ is the $s$-dimensional closed standard simplex
defined above, and
\[
\widetilde{\Delta}_s
=
\left\{
\mathbf{x}\in\mathbb{R}_{\ge0}^{s}
:
\sum_{j=1}^{s}x_j\le1,\quad x_s>0
\right\}
\]
is the $s$-dimensional half-open standard simplex.
\end{thm}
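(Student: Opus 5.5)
The plan is to reduce Theorem~\ref{Sgeo} to Theorem~\ref{thm:Sgf}. For a (half-open) lattice polytope $Q$ of dimension $m$, the $h^*$-polynomial is defined through the Ehrhart series
\[
\sum_{n\ge0}\#\bigl(nQ\cap\mathbb{Z}^m\bigr)x^n=\frac{h^*(Q;x)}{(1-x)^{m+1}}.
\]
It therefore suffices to show that for every $n\ge0$
\[
\#\bigl(nQ_{\mathbf m}^{\mathbf r}\cap\mathbb{Z}^m\bigr)=\binom{r_1n+m_1}{m_1}\prod_{i=2}^d\binom{r_in+m_i-1}{m_i},
\]
and then compare with \eqref{eq:Sgf}.

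Since $Q_{\mathbf m}^{\mathbf r}$ is a Cartesian product and dilation commutes with products, the lattice points of $nQ_{\mathbf m}^{\mathbf r}$ form the product of the lattice point sets of the dilated factors. The count is therefore multiplicative, and we only need to count each factor.

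\textbf{The closed factor $nr_1\Delta_{m_1}$.} Its lattice points are the vectors $\mathbf{x}\in\mathbb{Z}_{\ge0}^{m_1}$ with $\sum x_j\le r_1n$. Adding a slack variable and applying stars and bars gives $\binom{r_1n+m_1}{m_1}$.

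\textbf{The half-open factors $nr_i\widetilde{\Delta}_{m_i}$.} Its lattice points are the $\mathbf{x}\in\mathbb{Z}_{\ge0}^{m_i}$ with $\sum x_j\le r_in$ and $x_{m_i}\ge1$. Substituting $x_{m_i}'=x_{m_i}-1$ turns this into the closed count with bound $r_in-1$, giving $\binom{r_in-1+m_i}{m_i}$.

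Two points need care.
\begin{itemize}
\item At $n=0$ the half-open factor should contribute $0$ points, and $\binom{m_i-1}{m_i}=0$ agrees with this. Since $d\ge 2$ may or may not hold, the case $d=1$ is simply $Q=r_1\Delta_{m_1}$, and nothing changes. The constant term of the series is then $1$ when $d=1$ and $0$ when $d\ge2$. This is consistent with \eqref{eq:Sgf}, because every $\pi^{\mathbf c}$ has $\operatorname{asc}\ge1$ when $d\ge2$.
\item We must confirm that $Q_{\mathbf m}^{\mathbf r}$ is a half-open lattice polytope of full dimension $m$ in the sense of Section~2. It is obtained from the lattice polytope $P_{\mathbf m}^{\mathbf r}$ by removing the facets $\{x_{m_i}=0\}$ of the factors $i\ge2$. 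So the Ehrhart series has the form $h^*(Q;x)/(1-x)^{m+1}$ with $h^*$ a polynomial (for instance by the half-open decomposition results of K\"ohl, Li, Rauh and Sanyal cited there, or directly from \eqref{eq:Sgf}).
\end{itemize}

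Multiplying the factor counts gives exactly the coefficient of $x^n$ in \eqref{eq:Sgf}. Theorem~\ref{thm:Sgf} then yields
\[
\frac{S_{\mathbf m}^{\mathbf r}(x)}{(1-x)^{m+1}}=\frac{h^*(Q_{\mathbf m}^{\mathbf r};x)}{(1-x)^{m+1}},
\]
and hence $S_{\mathbf m}^{\mathbf r}(x)=h^*(Q_{\mathbf m}^{\mathbf r};x)$. No step is a real obstacle once Theorem~\ref{thm:Sgf} is available. The only delicate point is the bookkeeping of the removed facet in each half-open factor and the $n=0$ boundary term.
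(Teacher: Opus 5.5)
Your proposal is correct and follows essentially the same route as the paper: write $\mathcal{L}(Q_{\mathbf m}^{\mathbf r};n)$ as the product of the lattice-point counts of the factors, match it with the coefficient of $x^n$ in \eqref{eq:Sgf}, and read off $h^*$ from its definition. The only cosmetic difference is how the half-open factor is counted. You use the shift $x_{m_i}\mapsto x_{m_i}-1$, whereas the paper (in Section~2) subtracts the lattice points of the removed facet $\{x_{m_i}=0\}$; both give $\binom{r_in+m_i-1}{m_i}$, and your explicit $n=0$ check is a harmless extra.
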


The identity~\eqref{eq:Sgf} also enables us to establish
the following self-interlacing property.
Here $\preceq$ denotes the interlacing relation and
$\mathcal{I}_m$ denotes the reciprocal operator with respect to
degree $m$; see Section~2 for the precise definitions.

\begin{thm}\label{thm:Sself}
Suppose that $\mathbf{m},\mathbf{r}\in\mathbb{Z}_{>0}^d$ satisfy $r_j\ge m_j+1$ for all $j\in [d]$.
Then,
\[
\mathcal{I}_m
\bigl(S_{\mathbf{m}}^{\mathbf{r}}\bigr)
\preceq
S_{\mathbf{m}}^{\mathbf{r}}.
\]
\end{thm}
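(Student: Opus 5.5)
Self-interlacing will come from the standard $h^*$/Ehrhart framework (Brändén–Solus style), in the same way Deligeorgaki, Han and Solus proved it for $A_{\mathbf m}^{\mathbf r}$. Write $S=S_{\mathbf m}^{\mathbf r}$, which has degree at most $m$. By Theorem~\ref{thm:Sgf}, $S(x)/(1-x)^{m+1}=\sum_{n\ge0}f(n)x^n$ with
\[
f(n)=\binom{r_1n+m_1}{m_1}\prod_{i=2}^d\binom{r_in+m_i-1}{m_i},
\]
a polynomial in $n$ of degree $m$.

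The key tool is the known criterion. Decompose $S=a+xb$, where $a$ and $b$ are the symmetric and antisymmetric-type parts with $\mathcal{I}_m(S)\preceq S$ equivalent to $b\preceq a$ (both real-rooted and interlacing). By a result of Brändén and Solus (also used in \cite{DHS26}), this holds when the polynomial $f(n)$, written in a suitable shifted basis, has only real zeros in $[-1,0]$. The precise statement I would invoke: if every zero of $f$ lies in $[-1,0]$ (and $f$ is nonnegative on integers), then the $h^*$-type numerator satisfies $\mathcal{I}_m(S)\preceq S$.

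The first step is to locate the zeros of $f$. The factor $\binom{r_1n+m_1}{m_1}$ vanishes at $n=-k/r_1$ for $k=1,\dots,m_1$. Each factor $\binom{r_in+m_i-1}{m_i}$ vanishes at $n=-k/r_i$ for $k=0,\dots,m_i-1$. Under $r_j\ge m_j+1$, all of these lie in $(-1,0]$, so every zero of $f$ is real and in $[-1,0]$. The zero at $n=0$, of multiplicity $d-1$, reflects that $Q_{\mathbf m}^{\mathbf r}$ is half-open (compare Theorem~\ref{Sgeo}).

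The second step is to apply the criterion, interpreting $f(n)$ as the product of Ehrhart-type polynomials of the factors. One approach is to use the interlacing-preserving nature of the diamond/Hadamard-type product. A cleaner alternative works with $\sum f(n)x^n$ directly via the linear operator $\mathcal{E}$ sending $\binom{n+m-k}{m}\mapsto x^k$. The polynomial $f(n)=c\prod_j(n+\alpha_j)$ with $\alpha_j\in[0,1]$ is then built up factor by factor. Multiplying by $(n+\alpha)$ with $\alpha\in[0,1]$ corresponds on the $x$-side to an operator of the form
\[
h\mapsto \bigl(\alpha+(k-\alpha)x\bigr)h+x(1-x)h',
\]
taking degree $k$ to degree $k+1$. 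I would verify that this operator preserves the property $\mathcal{I}_k(h)\preceq h$, going to $\mathcal{I}_{k+1}(\cdot)\preceq\cdot$. This is the mechanism behind the Brändén–Solus theorem, and I expect it to be the main obstacle. One must track the symmetric decomposition $h=a+xb$ and check that the new $a$ and $b$ are obtained from the old ones by operators of the form ``$(\text{linear})\cdot g+x(1-x)g'$''. Those operators preserve real-rootedness and interlacing via the standard lemma on compatible polynomials and Wagner/Obreschkoff-type arguments, and $\alpha\in[0,1]$ is exactly what keeps the relevant coefficients nonnegative.

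The base case is $f\equiv c>0$, whose numerator is $c$, which trivially satisfies $\mathcal{I}_0(c)\preceq c$. Iterating over all $m$ linear factors then gives $\mathcal{I}_m(S)\preceq S$. The leading constant $c>0$ and the degree bookkeeping are routine. The only place the hypothesis $r_j\ge m_j+1$ enters is ensuring $\alpha_j=k/r_j\le1$. In fact the hypothesis is slightly stronger than needed for the factors $\binom{r_in+m_i-1}{m_i}$, which only require $r_i\ge m_i-1$.
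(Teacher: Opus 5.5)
Your first step (locating the zeros of $f$) is correct, but the criterion you invoke in the second step is false, and so is the inductive step you flagged as the main obstacle. Having all zeros of $f$ in $[-1,0]$ gives only real-rootedness of $S$. That is Br\"and\'en's theorem, which the paper uses for Corollary~\ref{Sprops}; it does not give self-interlacing.

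Take $f(n)=n+\alpha$. Then $S=\alpha+(1-\alpha)x$ and $\mathcal{I}_1(S)=(1-\alpha)+\alpha x$. For $\alpha\in(1/2,1)$, the zero $-(1-\alpha)/\alpha$ of $\mathcal{I}_1(S)$ lies strictly to the right of the zero $-\alpha/(1-\alpha)$ of $S$. Hence $\mathcal{I}_1(S)\not\preceq S$, even though the zero of $f$ lies in $(-1,0)$. So multiplying the constant $1$ by $(n+\alpha)$ destroys self-interlacing, and your operator does not preserve the property for all $\alpha\in[0,1]$. (Its linear coefficient should be $\alpha+(k+1-\alpha)x$ if $k$ is the old degree.)

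The same failure occurs inside the paper's own family. For $\mathbf m=\mathbf r=(2)$ we get $f(n)=(n+1)(2n+1)$, with zeros $-1,-1/2\in[-1,0]$, and $S=1+3x$. But $\mathcal{I}_2(S)=x(x+3)$ has larger degree than $S$, so it cannot interlace $S$. This also refutes your closing remark. The hypothesis $r_j\ge m_j+1$ is not merely there to ensure $k/r_j\le 1$, which already holds when $r_1=m_1$, where the conclusion fails.

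The missing idea is a comparison with a reference whose zero configuration is symmetric under $z\mapsto -1-z$, so that its $h$-polynomial is palindromic and real-rooted. The paper uses $\mathcal{L}(P_{\mathbf m}^{\mathbf m+\mathbf 1};x)=\prod_j\binom{(m_j+1)x+m_j}{m_j}$, whose zeros are $-k/(m_j+1)$. For it, $\mathcal{E}(\mathcal{L}(P_{\mathbf m}^{\mathbf m+\mathbf 1};x))\in\mathcal{A}_m$ is known from \cite{DHS26}.

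The hypothesis is exactly what gives the pairings $-k/(m_1+1)\le -k/r_1$ and $-k/(m_j+1)\le -(k-1)/r_j$ for $j\ge 2$. Then \cite[Theorem~4.6]{Bra06} yields $\mathcal{E}(\mathcal{L}(P_{\mathbf m}^{\mathbf m+\mathbf 1};x))\preceq\mathcal{E}(\mathcal{L}(Q_{\mathbf m}^{\mathbf r};x))$. Magic positivity places the latter in $\mathcal{E}_m$, and Lemma~\ref{BS21} upgrades it to $\mathcal{A}_m$. Finally, the $f$-polynomial correspondence turns $\mathcal{R}_m$-self-interlacing back into $\mathcal{I}_m(S)\preceq S$.

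Without such a symmetric reference, or some equivalent invariant controlling how the zeros pair up around $-1/2$, your factor-by-factor induction has nothing to propagate.
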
This theorem implies several other distributional properties,
including log-concavity, unimodality,
the alternatingly increasing property and bi-\(\gamma\)-positivity~\cite{BS21, DBZ24}.

Finally,
let $\mathbf{m}_{+j}$ denote the vector obtained by increasing the $j$-th component of $\mathbf{m}$ by one.
We obtain the following results.
\begin{thm}\label{rS}
For all $\mathbf{m},\mathbf{r}\in\mathbb{Z}_{>0}^d$, the polynomial $S_{\mathbf{m}}^{\mathbf{r}}$ is real-rooted.
In particular, it is unimodal and log-concave.
Moreover, for all $j\in[d]$,
\[
S_{\mathbf{m}}^{\mathbf{r}}
\preceq
S_{\mathbf{m}_{+j}}^{\mathbf{r}}.
\]
\end{thm}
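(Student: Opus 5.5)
The plan is to extract from Theorem~\ref{thm:Sgf} a first-order differential recurrence linking $S_{\mathbf m_{+j}}^{\mathbf r}$ to $S_{\mathbf m}^{\mathbf r}$. I will then show by induction on $m$ that this recurrence preserves real-rootedness with all roots in $(-\infty,0]$, and that it produces the interlacing $S_{\mathbf m}^{\mathbf r}\preceq S_{\mathbf m_{+j}}^{\mathbf r}$. Unimodality and log-concavity then follow from real-rootedness, since the coefficients are nonnegative.

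\emph{Step 1 (recurrence).} Write $f_{\mathbf m}(x)=S_{\mathbf m}^{\mathbf r}(x)/(1-x)^{m+1}=\sum_{n}F_{\mathbf m}(n)x^n$, with $F_{\mathbf m}(n)$ the product of binomials in \eqref{eq:Sgf}. Passing from $\mathbf m$ to $\mathbf m_{+j}$ multiplies the $j$-th factor of $F_{\mathbf m}(n)$ by $(r_jn+c_j)/(m_j+1)$, where $c_1=m_1+1$ and $c_j=m_j$ for $j\ge2$. Since multiplication by $n$ corresponds to $x\frac{d}{dx}$, we get $(m_j+1)f_{\mathbf m_{+j}}=r_jxf_{\mathbf m}'+c_jf_{\mathbf m}$. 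Clearing the factor $(1-x)^{m+2}$ gives
\[
(m_j+1)S_{\mathbf m_{+j}}^{\mathbf r}=\bigl(c_j+(r_j(m+1)-c_j)x\bigr)S_{\mathbf m}^{\mathbf r}+r_jx(1-x)\bigl(S_{\mathbf m}^{\mathbf r}\bigr)'.
\]
To have a clean base case, I would allow $m_j=0$ in \eqref{eq:Sgf}. The corresponding factor is then $1$, which agrees with deleting the letter $j$. I will check that the recurrence remains valid at the boundary, including the case $c_j=0$, where it produces the expected factor $x$. The induction then starts from $S=1$ at $\mathbf m=\mathbf 0$ and adds one letter at a time. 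For $d\ge 2$ the constant term of $S_{\mathbf m}^{\mathbf r}$ vanishes (the $n=0$ term contains $\binom{m_i-1}{m_i}=0$), so $0$ is a root and the correct region is $(-\infty,0]$.

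\emph{Step 2 (real-rootedness and interlacing).} Assume $S=S_{\mathbf m}^{\mathbf r}$ has nonnegative coefficients and only real roots, all in $(-\infty,0]$, and write $T$ for the right-hand side above. I would first divide out $\gcd(S,S')$ together with the factor $x$ when it is present; each common factor divides $T$, which reduces the argument to simple negative roots $\xi_1<\dots<\xi_k<0$. At each such root, $T(\xi_i)=r_j\xi_i(1-\xi_i)S'(\xi_i)$. Because $\xi_i(1-\xi_i)<0$ and $S'$ alternates in sign on consecutive simple roots, $T$ alternates in sign on the $\xi_i$. This gives one root of $T$ in each interval $(\xi_i,\xi_{i+1})$. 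The remaining roots are located by comparing signs at $\xi_k$ with the value at $0$ (namely $c_jS(0)$, or the appropriate coefficient once the factor $x$ is extracted), and at $\xi_1$ with the behaviour at $-\infty$, where $\deg T\le\deg S+1$ and the leading coefficient is nonnegative. Together these place one root in $(\xi_k,0]$ and one in $(-\infty,\xi_1)$ when the degree increases, which is exactly $S\preceq T$ in the convention of Section~2.

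\emph{Main obstacle.} I expect the bookkeeping at the endpoints to be the hardest part. This covers the degree-jump case versus the degree-preserving case, the root at $0$ that appears for $j\ge2$ at $m_j=0$, and keeping the interlacing direction consistent with the paper's definition of $\preceq$. I would first try to sidestep all of this by invoking a standard lemma, for instance that $S\mapsto a(x)S+b(x)S'$ with $a$ linear and $b(x)=r_jx(1-x)\le0$ on $(-\infty,0]$ preserves real-rootedness and gives $S\preceq T$. Failing that, I would carry out the sign count directly as described in Step~2.
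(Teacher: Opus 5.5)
Your proposal is correct and is essentially the paper's proof. Your recurrence is exactly Proposition~\ref{reS}, and your Step~2 is the content of the Liu--Wang criterion (Theorem~\ref{LW07} with $p=S_{\mathbf m}^{\mathbf r}$, $q=(S_{\mathbf m}^{\mathbf r})'$ and $b(x)=\frac{r_j}{m_j+1}x(1-x)\le 0$ at the nonpositive roots), which the paper simply quotes. The only real difference is the base case: the paper starts the induction at $\mathbf m=\mathbf 1$ via Corollary~\ref{Sprops} (Br\"and\'en's theorem), whereas your start at $\mathbf m=\mathbf 0$ with zero multiplicities (legitimate by Corollary~\ref{cor:wr}) makes the induction self-contained.
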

\begin{thm}\label{rA}
For all $\mathbf{m},\mathbf{r}\in\mathbb{Z}_{>0}^d$, the polynomial $A_{\mathbf{m}}^{\mathbf{r}}$ is real-rooted.
In particular, it is unimodal and log-concave.
Moreover, for all $j\in[d]$,
\[
A_{\mathbf{m}}^{\mathbf{r}}
\preceq
A_{\mathbf{m}_{+j}}^{\mathbf{r}}.
\]
\end{thm}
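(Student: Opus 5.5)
Theorem~\ref{rA} will be proved by induction on $m=m_1+\cdots+m_d$, using a recurrence that expresses $A_{\mathbf m_{+j}}^{\mathbf r}$ in terms of $A_{\mathbf m}^{\mathbf r}$ and its derivative, followed by the standard fact that such operators preserve real-rootedness and produce interlacing sequences. The recurrence comes from the generating function identity~\eqref{A1}. Write $F_{\mathbf m}(x)=\sum_{n\ge0}\prod_i\binom{r_in+m_i}{m_i}x^n$. Then $F_{\mathbf m_{+j}}$ is obtained from $F_{\mathbf m}$ by multiplying the $n$-th coefficient by $\frac{r_jn+m_j+1}{m_j+1}$. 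In operator form,
\[
F_{\mathbf m_{+j}}=\tfrac{1}{m_j+1}\bigl(r_j x D+(m_j+1)\bigr)F_{\mathbf m},\qquad D=\tfrac{d}{dx}.
\]
Substituting $F_{\mathbf m}=A_{\mathbf m}^{\mathbf r}/(1-x)^{m+1}$ and clearing denominators gives
\[
(m_j+1)A_{\mathbf m_{+j}}^{\mathbf r}=\bigl((m_j+1)(1-x)+r_j(m+1)x\bigr)A_{\mathbf m}^{\mathbf r}+r_jx(1-x)\,(A_{\mathbf m}^{\mathbf r})'.
\]
The base case is $\mathbf m=\mathbf 0$, where $A=1$. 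Alternatively, one can start at $\mathbf m=\mathbf 1$ with any order of increments; the same recurrence applies from $\mathbf 0$ since~\eqref{A1} holds trivially there.

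Next I check degrees and signs. By~\eqref{A1}, $\deg A_{\mathbf m}^{\mathbf r}\le m$ and all coefficients are nonnegative, since the polynomial counts permutations. Since the constant term is $1$, $0$ is never a root. Now set $f=A_{\mathbf m}^{\mathbf r}$, assumed real-rooted with all roots in $(-\infty,0)$; these roots are nonpositive because the coefficients are positive. The new polynomial has the form
\[
g=a(x)f+b(x)f' \quad\text{with } b(x)=r_jx(1-x).
\]
I would apply the classical lemma (e.g.\ from Br\"and\'en's survey or Liu--Wang) that if $f$ is real-rooted with nonnegative coefficients, $b\le0$ on the roots of $f$, and $\deg g\le\deg f+1$, then $g$ is real-rooted and $f\preceq g$. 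To use it I evaluate the sign of $g$ at consecutive roots $\xi_1<\dots<\xi_k<0$ of $f$. At such a root, $g(\xi_i)=r_j\xi_i(1-\xi_i)f'(\xi_i)$, and $\xi_i(1-\xi_i)<0$, so $g$ alternates in sign opposite to $f'$. This forces a root of $g$ in each gap $(\xi_i,\xi_{i+1})$. It remains to locate the extra roots.

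The main obstacle is the boundary behavior: there may be one root left of $\xi_1$ and possibly one in $(\xi_k,0)$, and I must handle multiple roots and the exact degree of $g$. I would first reduce to simple roots by a perturbation/continuity argument, or by noting that multiple roots of $f$ are roots of $g$ of multiplicity one less. Then I compare signs. At $0$, $g(0)=(m_j+1)f(0)>0$. Near $\xi_k^+$ the sign of $g$ is opposite that of $f'(\xi_k)$, which gives a root in $(\xi_k,0)$. For the leading behavior as $x\to-\infty$, I compute the leading coefficient of $g$ from the recurrence and compare with $\deg f$. This is the delicate step, since the degree of $A$ can be less than $m$ (e.g.\ when some $r_j=1$), so the count "$\deg g=\deg f$ or $\deg f+1$" must be handled in both cases. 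Either way, the root count matches and the roots of $g$ interlace those of $f$ in the sense $f\preceq g$. This gives $A_{\mathbf m}^{\mathbf r}\preceq A_{\mathbf m_{+j}}^{\mathbf r}$. Real-rootedness follows by induction, and unimodality and log-concavity follow by Newton's inequalities, since the coefficients are positive.
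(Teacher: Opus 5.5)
Your proposal is correct and is essentially the paper's proof: the same recurrence derived from \eqref{A1} (Proposition~\ref{reA}), the Liu--Wang criterion (Theorem~\ref{LW07}) with $p=A_{\mathbf m}^{\mathbf r}$, $q=(A_{\mathbf m}^{\mathbf r})'$ and $b(x)=\frac{r_j}{m_j+1}x(1-x)\le 0$ at the nonpositive roots of $p$, and induction on $m$. The differences are minor. You start the induction at $\mathbf m=\mathbf 0$ (extending \eqref{A1} to $\mathbb N^d$), which makes the argument self-contained instead of invoking \cite{DHS26} for $\mathbf m=\mathbf 1$ as the paper does, and the degree check you left open closes at once: writing $A_{\mathbf m}^{\mathbf r}=\sum_k c_kx^k$ with $D=\deg A_{\mathbf m}^{\mathbf r}$, the $x^{D+1}$-coefficient of $(m_j+1)A_{\mathbf m_{+j}}^{\mathbf r}$ is $\bigl(r_j(m+1-D)-m_j-1\bigr)c_D$, and when this vanishes the $x^{D}$-coefficient is $(m_j+1+r_jD)c_D+r_jc_{D-1}>0$.
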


The proof of Theorem~\ref{thm:Sgf} uses a standard
barred permutation argument; see Gessel and Stanley~\cite{GS78}.
The proof of Theorem~\ref{thm:Sself} follows closely the argument
in~\cite[Theorem~3.11]{DHS26}.
The proofs of Theorems~\ref{rS} and~\ref{rA} are based on new
recurrence relations for
$S_{\mathbf{m}}^{\mathbf{r}}$ and
$A_{\mathbf{m}}^{\mathbf{r}}$, respectively.

The paper is organized as follows.
Section~\ref{2} explains basic definitions and known results about interlacing polynomials,
real-rooted polynomials and the Ehrhart theory of lattice polytopes.
Section~\ref{3} proves the generating function identity for
$S_{\mathbf{m}}^{\mathbf{r}}$,
shows that $S_{\mathbf{m}}^{\mathbf{r}}$ is the $h^*$-polynomial
of the half-open polytope $Q_{\mathbf{m}}^{\mathbf{r}}$ and proves that $S_{\mathbf{m}}^{\mathbf{r}}$ is self-interlacing
when $r_j\ge m_j+1$ for every $j\in[d]$.
Section~\ref{4} finds recurrence relations for $A_{\mathbf{m}}^{\mathbf{r}}$ and $S_{\mathbf{m}}^{\mathbf{r}}$ based on multiplicity vectors
and proves that both polynomials have only real roots for all positive integer vectors $\mathbf{m},\mathbf{r}$.
Section~\ref{5} uses our generating function identities to interpret combinatorially the $h^*$-polynomials of Pitman--Stanley polytopes,
composition polytopes and a family of reflexive lattice polytopes defined from preorders.
\section{Preliminaries}\label{2}
\hspace*{\parindent}
Let $p(x)=c_0+c_1x+\cdots+c_dx^d$ be a polynomial with real coefficients.
We say that $p(x)$ is \emph{unimodal} if its coefficients satisfy
$c_0\leq c_1\leq\cdots\leq c_\ell
\geq\cdots\geq c_{d-1}\geq c_d$
($\ell$ is called a \emph{mode} of the sequence).
It is called \emph{symmetric}
if $c_k=c_{d-k}$ for all $0\leq k\leq d.$
It is \emph{$\gamma$-positive} if $p(x)=\sum_{k=0}^{\lfloor d/2\rfloor} \gamma_k x^k(1+x)^{d-2k}$ for $\gamma_k\ge 0$, with $0\le k\le{\lfloor d/2\rfloor}$.
It is \emph{real-rooted} if either $p(x)\equiv 0$ or every root of $p(x)$ is real.
Every $\gamma$-positive polynomial is symmetric and unimodal, and every real-rooted and symmetric polynomial with nonnegative coefficients is $\gamma$-positive.
We refer the reader to \cite{Ath18,Bra15,Gal05,Pet15,Sta89} for more information.

A polynomial $p(x)=c_0+c_1x+\cdots+c_dx^d$ is called \emph{alternatingly increasing} if
\[
c_0 \le c_d \le c_1 \le c_{d-1} \le \cdots \le c_{\lfloor (d+1)/2\rfloor}.
\]
This property,
which is stronger than unimodality,
has recently been the focus of a variety of conjectures in combinatorics,
as well as a tool to prove unimodality.
The alternatingly
increasing property of $p$ is inherently tied to a unique symmetric decomposition of $p$:
Given a polynomial $p(x)$ of degree at most $d$, its $\mathcal{I}_d$-decomposition is the unique representation
\[
p(x)=a(x)+x\,b(x),
\]
where $a$ is symmetric with respect to degree $d$,
$b$ is symmetric with respect to degree $d-1$, $\deg a\le d$ and $\deg b\le d-1$.
A basic observation is that $p$ is alternatingly increasing if and only if both $a$ and $b$ have nonnegative coefficients and are unimodal.
When $a$ and $b$ are both $\gamma$-positive, $p$ is said to be \emph{bi-$\gamma$-positive}.

Let $p(x), q(x)$ be real-rooted polynomials.
Suppose that $(r_i)_{1\le i\le d}$ and
$(s_i)_{1\le i\le \ell}$ are all roots of $p(x)$ and $q(x)$
in nonincreasing order, respectively.
We say that $q(x)$ interlaces $p(x)$,
denoted by $q(x)\preceq p(x)$,
if $\ell=d-1$ or $\ell=d$, and
\[
(s_d\leq)~r_d\leq s_{d-1}\leq r_{d-1}
\leq\cdots\leq r_2\leq s_1\leq r_1.
\]
For notational convenience, let $a\preceq bx+c$ for any constants $a,b,c$ and $p(x)\preceq 0,\ 0\preceq p(x)$ for any real polynomial $p(x)$ with only real roots.
For a polynomial $p$ of degree at most $d$, its reciprocal
with respect to degree $d$ is
\(
\mathcal{I}_d(p)=x^d p(1/x).
\)
A polynomial $p$ is \emph{self-interlacing} with respect to $d$ if $\mathcal{I}_d(p)\preceq p$.
Self-interlacing polynomials are a natural generalization of symmetric, real-rooted polynomials.

Liu and Wang~\cite{LW07} established the following  extremely useful tool for proving the real-rootedness and interlacing property.
\begin{thm}[{[\citen{LW07}]}, Theorem 2.1]\label{LW07}
Let $F$, $p$, $q$ be three real polynomials satisfying the following conditions.
\begin{itemize}
    \item[\rm{(a)}] $F(x) = a(x)p(x) + b(x)q(x)$, where $a(x), b(x)$ are real polynomials and $\deg F = \deg p$ or $\deg p + 1$.
    \item[\rm{(b)}] $p, q$ are real-rooted  and $q \preceq p$.
    \item[\rm{(c)}] $F$ and $q$ have leading coefficients of the same sign.
\end{itemize}
Suppose that $b(r) \le 0$ whenever $p(r) = 0$. Then,
 $F$ is real-rooted and $p \preceq F$.
\end{thm}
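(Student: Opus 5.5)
The plan is the classical sign-alternation argument: evaluate $F$ at the zeros of $p$, locate its zeros by the intermediate value theorem, and finish by counting. I first treat the generic case, where $p$ has $d$ simple roots $r_1>r_2>\cdots>r_d$ and $q$ does not vanish at any of them. The degenerate cases are handled at the end.

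\textbf{Step 1: signs of $F$ at the zeros of $p$.} Since $p(r_i)=0$, condition (a) gives
\[
F(r_i)=b(r_i)\,q(r_i).
\]
The relation $q\preceq p$ puts exactly one root of $q$ in each interval $(r_{i+1},r_i)$ and no root of $q$ to the right of $r_1$. Let $c$ be the leading coefficient of $q$. Then $\operatorname{sgn} q(r_i)=(-1)^{i-1}\operatorname{sgn} c$. Because $b(r_i)\le 0$, we get $\operatorname{sgn}F(r_i)=(-1)^{i}\operatorname{sgn} c$ whenever $F(r_i)\neq 0$. So $F$ alternates in sign along $r_1,\dots,r_d$, and the intermediate value theorem gives a root of $F$ in each of the $d-1$ intervals $[r_{i+1},r_i]$.

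\textbf{Step 2: the two ends.}
\begin{itemize}
\item[\rm{(i)}] Right end. $F(r_1)$ has sign opposite to $c$. By condition (c), $F(x)$ has the sign of $c$ as $x\to+\infty$, so $F$ has a root in $[r_1,\infty)$. This gives $d$ real roots. If $\deg F=d$, these are all the roots.
\item[\rm{(ii)}] Left end, when $\deg F=d+1$. Here $F$ has sign $(-1)^{d+1}\operatorname{sgn}c$ at $-\infty$, while $F(r_d)$ has sign $(-1)^{d}\operatorname{sgn}c$. This holds in both cases $\ell=d-1$ and $\ell=d$, because in either case exactly $d-1$ roots of $q$ exceed $r_d$. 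Hence $F$ has one more root in $(-\infty,r_d]$.
\end{itemize}
In both cases $F$ has $\deg F$ real roots, one in each of the intervals listed above. This is exactly the statement that $p\preceq F$.

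\textbf{Step 3: degenerate cases (the main obstacle).} The remaining work is where $p$ has multiple roots, or $q(r_i)=0$, or $b(r_i)=0$, so some signs above are $0$ rather than strict. I would first handle common zeros. If $p(r)=q(r)=0$, then $(x-r)$ divides $F$, so I factor out $\gcd(p,q)$ from $p$, $q$ and $F$. The hypotheses persist for the quotients, and the interlacing conclusion lifts back after reinserting the common factor. This reduces to the case where $p$ and $q$ have no common roots, which forces $p$ to have simple roots. Zeros of $F$ landing exactly at some $r_i$ (from $b(r_i)=0$) are handled by closed intervals: the bookkeeping still produces at least one root in each closed interval, and a root counted twice at a shared endpoint must be double. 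Alternatively, I would perturb $b$ to $b-\varepsilon$ with $\varepsilon>0$. This makes every $b(r_i)<0$ strictly, the leading coefficients and degrees stay the same for small $\varepsilon$, and I pass to the limit $\varepsilon\to0$ using the fact that limits of real-rooted, interlacing families remain real-rooted and interlacing (Hurwitz's theorem).
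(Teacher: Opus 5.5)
The paper gives no proof of this statement. It is quoted from \cite{LW07} and used as a black box in the inductions for Theorems~\ref{rS} and~\ref{rA}, so there is no in-paper argument to compare with. Your proof is correct and is the standard sign-alternation argument for this lemma. At each zero $r_i$ of $p$ one has $F(r_i)=b(r_i)q(r_i)$. In the reduced case, $q\preceq p$ fixes $\operatorname{sgn} q(r_i)=(-1)^{i-1}\operatorname{sgn}c$. The intermediate value theorem, together with condition (c) at $\pm\infty$ and the degree count, then places exactly one zero of $F$ in each required interval.

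Two points should be tightened.

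First, the reduction by $g=\gcd(p,q)$ relies on the fact that deleting or reinserting common zeros preserves interlacing. This is true but deserves a line. For example, $q\preceq p$ holds if and only if $0\le N_p(t)-N_q(t)\le 1$ for all real $t$, where $N_f(t)$ is the number of zeros of $f$ in $[t,\infty)$ counted with multiplicity. A common zero changes both counts by the same amount, so the condition is unaffected.

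Second, the remark that ``a root counted twice at a shared endpoint must be double'' is not correct as a local statement. If $b(r_i)=0$ and $r_i$ is a simple zero of $F$, then $F$ changes sign at $r_i$, and this forces an extra zero in one of the two adjacent intervals. The count does work out globally, but not for the reason you give. Drop that route and keep your perturbation $F_\varepsilon=F-\varepsilon q$, which is rigorous:
\begin{itemize}
\item After the gcd reduction $q(r_i)\neq0$, so $F_\varepsilon(r_i)=(b(r_i)-\varepsilon)q(r_i)\neq0$ has the required sign.
\item For small $\varepsilon>0$, the degree and leading sign of $F_\varepsilon$ agree with those of $F$, including when $\deg F=\deg q$. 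This holds precisely because $F$ and $q$ have leading coefficients of the same sign.
\item Real-rootedness and the weak interlacing inequalities pass to the limit $\varepsilon\to0^+$ by continuity of roots at fixed degree.
\end{itemize}
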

We refer the reader to Beck and Robins~\cite{BR07} for background in Ehrhart theory.
For an $s$-dimensional lattice polytope $P\subset\mathbb{R}^d$,
its \emph{Ehrhart polynomial} is the unique polynomial \( \mathcal{L}(P;x)\) with the property that $\mathcal{L}(P;n)=|nP\cap\mathbb{Z}^d|$  for every \( n\in\mathbb{N} \).
The $h^*$-polynomial $h^*(P;x)$ of $P$ is defined by the equation
\[
\sum_{n\ge 0}\mathcal{L}(P;n)x^n=\frac{h^*(P;x)}{(1-x)^{s+1}}.
\]
For the $s$-dimensional standard simplex
\[
\Delta_{s} = \left\{ \mathbf{x}\in\mathbb{R}_{\ge 0}^{s} : \sum_{j=1}^{s} x_j \le 1 \right\}
\]
we have $\mathcal{L}(\Delta_s;x)=\binom{x+s}{s}$ \cite[Theorem 2.2]{BR07}, hence $\mathcal{L}(r\Delta_s;x)=\binom{rx+s}{s}$.

The above notions extend naturally to half-open polytopes.
Let
\[
P=\{x\in\mathbb{R}^s:Ax\le b\}
\]
be a polytope.
A half-open polytope is obtained from $P$ by replacing some of the defining weak inequalities with the corresponding strict inequalities.
Such objects have appeared in the literature; see, for example, Nan Li's work on half-open hypersimplices~\cite{Li12}.
For half-open lattice polytopes,
we use the same definitions of the Ehrhart polynomial and
the $h^*$-polynomial as for lattice polytopes.
As a basic example,
consider the half-open simplex
\[
\widetilde{\Delta}_s=
\left\{
x\in\mathbb R_{\ge0}^s:
\sum_{j=1}^s x_j\le1,\,
x_s>0
\right\}.
\]
Then,
\[
\mathcal{L}(r\widetilde{\Delta}_s;x)=\binom{rx+s-1}{s}.
\]
Indeed,
the lattice points in $r\widetilde{\Delta}_s$ are those in $r\Delta_s$ with positive last coordinate.
Excluding the face $x_s=0$, which is a $(s-1)$-simplex with Ehrhart polynomial $\binom{rx+s-1}{s-1}$,
we get
\[
\mathcal{L}(r\widetilde{\Delta}_s;x)=\binom{rx+s}{s}-\binom{rx+s-1}{s-1}=\binom{rx+s-1}{s}.
\]
For a product of polytopes, the Ehrhart polynomial factors
\[
\mathcal{L}(P_1\times\cdots\times P_k;x)=\prod_{i=1}^k\mathcal{L}(P_i;x).
\]

The subdivision operator $\mathcal{E}:\mathbb{R}[x]\to\mathbb{R}[x]$ is the linear transformation defined by $$\mathcal{E}\binom{x}{k}=x^k$$ for all $k\ge 0$. 
The \emph{magic basis} for polynomials of degree at most $d$ is $\{x^i(1+x)^{d-i}:0\le i\le d\}$.
A polynomial is called \emph{magic positive} if its coefficients in this basis are nonnegative.
In what follows,
we let \( E_k^d := \mathcal{E}(x^k(x+1)^{d-k}), 0 \leq k \leq d \),
and we consider the sets
\[\mathcal{E}_d := \left\{ p = \sum_{k=0}^d c_k E_k^d : c_k \geq 0 \right\}\quad\text{and}\quad \mathcal{A}_d := \{ p \in \mathcal{E}_d : \mathcal{R}_d(p) \preceq p \},\]
where \(\mathcal{R}_d(p)=(-1)^dp(-1-x)\).
The following lemma will be used.

\begin{lem}[{[\citen{BS21}]}, Lemma 2.12]\label{BS21}
If $p\in\mathcal{A}_d$, $q\in\mathcal{E}_d$ and $p\preceq q$, then $q\in\mathcal{A}_d$.
\end{lem}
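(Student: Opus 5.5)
The approach is to translate everything into inequalities between roots. The reflection $x\mapsto -1-x$ reverses the order of roots, so membership in $\mathcal{A}_d$ should become a symmetric interlacing condition about the point $-1/2$. The hypothesis $p\preceq q$ then squeezes the roots of $q$ between those of $p$, and I will try to transport the condition from $p$ to $q$.

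\textbf{Step 1: reduce to root inequalities.} First I record the facts about $\mathcal{E}_d$ that I need. Each basis polynomial $E_k^d$ has nonnegative coefficients and positive leading coefficient. Hence every nonzero $q\in\mathcal{E}_d$ has positive leading coefficient and no positive roots. If $q$ has degree $e<d$, I will treat it as having $d-e$ roots at $-\infty$; this matches the convention $\mathcal{R}_d(q)=(-1)^dq(-1-x)$, where the corresponding roots of $\mathcal{R}_d(q)$ sit at $+\infty$ in a limiting sense. With that convention, write the roots of $q$ as $z_1\ge\cdots\ge z_d$. The roots of $\mathcal{R}_d(q)$, in nonincreasing order, are then $w_i=-1-z_{d+1-i}$. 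So $\mathcal{R}_d(q)\preceq q$ is equivalent to the family of inequalities
\[
z_i+z_{d+1-i}\ge -1 ,\qquad z_{i+1}+z_{d+1-i}\le -1 \qquad (1\le i\le d).
\]
The same conditions hold for the roots $y_1\ge\cdots\ge y_d$ of $p$, because $p\in\mathcal{A}_d$. The hypothesis $p\preceq q$ gives $y_i\le z_i\le y_{i-1}$.

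\textbf{Step 2: transfer the inequalities.} The first family transfers immediately:
\[
z_i+z_{d+1-i}\ge y_i+y_{d+1-i}\ge -1 .
\]
For the second family, the naive bound $z_{i+1}+z_{d+1-i}\le y_i+y_{d-i}$ pairs indices whose sum is $d$. The hypothesis on $p$ only controls the pairs $(k,d+2-k)$, so the indices do not line up directly. This is the step I expect to be the main obstacle.

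\textbf{Step 3: fallback if the index shift does not close.} If the bookkeeping cannot be made to work, I would replace the direct root comparison by a deformation argument. Set $q_t=(1-t)p+tq$ for $t\in[0,1]$. Since $p\preceq q$ and both have positive leading coefficients, the set of polynomials interlaced by $p$ is a convex cone. Hence every $q_t$ is real-rooted and $p\preceq q_t$; moreover $q_t\in\mathcal{E}_d$, because $\mathcal{E}_d$ is a cone. At $t=0$ we have $\mathcal{R}_d(q_0)\preceq q_0$. The relation $\mathcal{R}_d(q_t)\preceq q_t$ can only fail at a parameter where a root of $q_t$ crosses a root of its reflection, that is, where some root equals $-1/2$ or two roots become symmetric about $-1/2$. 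The plan is to show this is impossible by combining three facts: the reflected chain $\mathcal{R}_d(q_t)\preceq\mathcal{R}_d(p)$, the relation $\mathcal{R}_d(p)\preceq p\preceq q_t$, and positivity in the magic basis, which controls the sign of $q_t$ at $-1/2$ and at $-1$. If that works, continuity in $t$ gives $\mathcal{R}_d(q)\preceq q$, so $q\in\mathcal{A}_d$.
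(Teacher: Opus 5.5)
Your Step 1 reduction and the transfer of the first family in Step 2 are correct. The roots-at-$-\infty$ convention is unnecessary: every nonzero $q=\sum_k c_kE_k^d$ has leading coefficient $d!\sum_k c_k>0$, so it has degree exactly $d$. Also, the second family only runs over $1\le i\le d-1$.

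The genuine gap is the second family, $z_{i+1}+z_{d+1-i}\le -1$. The trouble is not only that it fails to transfer from $p$. The facts about $\mathcal{E}_d$ you recorded (positive leading coefficient, no positive roots) cannot produce it at all. For $d=2$ we have $E_0^2=(1+x)(1+2x)$, $E_1^2=2x(1+x)$ and $E_2^2=x(1+2x)$. Take $p=(x+0.1)(x+0.6)=0.06E_0^2+0.08E_1^2+0.36E_2^2$. Its reflection is $(x+0.4)(x+0.9)$, so $p\in\mathcal{A}_2$. Moreover $p\preceq q=x(x+0.3)$, yet $\mathcal{R}_2(q)=(x+1)(x+0.7)\not\preceq q$. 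This $q$ is excluded only because $q\notin\mathcal{E}_2$: every element of $\mathcal{E}_2$ has one root in $[-1,-\tfrac12]$ and one in $[-\tfrac12,0]$.

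Step 3 does not repair this. It is a plan rather than an argument, and the impossibility of crossings is never shown. Its proposed ingredient is also false: magic positivity does not fix the sign at $-\tfrac12$ for odd $d$. For instance, $E_3^3=x+6x^2+6x^3$ and $E_0^3=1+7x+12x^2+6x^3$ take the values $\tfrac14$ and $-\tfrac14$ there.

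The missing idea is that the second family holds for \emph{every} $q\in\mathcal{E}_d$, independently of $p$.
\begin{itemize}
\item The roots of $(1+x)^d$ lie termwise below those of $x^k(1+x)^{d-k}$, so \cite[Theorem~4.6]{Bra06} gives $E_0^d\preceq E_k^d$ for all $k$. The convex-cone property you invoke in Step 3 then gives $E_0^d\preceq q$ for all $q\in\mathcal{E}_d$.
\item Since $\sum_{n\ge0}(n+1)^dx^n=A_d(x)/(1-x)^{d+1}$, \cite[Lemma~2.7]{BS21} gives $E_0^d=(1+x)g(x)$ with $g(x)=(1+x)^{d-1}A_d\bigl(\tfrac{x}{1+x}\bigr)$.
\item Palindromicity of $A_d$ together with \cite[Lemma~2.2]{BS21} gives $\mathcal{R}_{d-1}(g)=g$. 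Hence the roots $\gamma_1\ge\cdots\ge\gamma_{d-1}$ of $g$ satisfy $\gamma_i+\gamma_{d-i}=-1$, and the roots of $E_0^d$ in nonincreasing order are $\gamma_1,\dots,\gamma_{d-1},-1$.
\item Now $E_0^d\preceq q$ says exactly $z_{i+1}\le\gamma_i$. Therefore $z_{i+1}+z_{d+1-i}\le\gamma_i+\gamma_{d-i}=-1$.
\end{itemize}
Together with your first family, this gives $\mathcal{R}_d(q)\preceq q$ directly, with no deformation argument. From $p\preceq q$ only the inequalities $y_i\le z_i$ are used.
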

\section{Main results}\label{3}
\hspace*{\parindent}
This section proves Theorems \ref{thm:Sgf}, \ref{Sgeo} and \ref{thm:Sself}.
The generating function identity~(\ref{eq:Sgf}) in Theorem \ref{thm:Sgf} is essential for proving the other two statements.

\begin{proof}[Proof of Theorem~\ref{thm:Sgf}.]
We prove the identity via a barred permutation combinatorial argument.
Take an arbitrary colored multiset permutation
\(
\pi^{\mathbf{c}}
=
\pi_1^{c_1}\cdots\pi_m^{c_m}
\in\mathfrak{S}_{\mathbf{m}}^{\mathbf{r}},
\)
which we extend by prepending the leading sentinel
\(\pi_0^{c_0}=1^1\).
The resulting word has \(m+1\) gaps: one immediately after the
sentinel, one between every pair of adjacent entries of
\(\pi^{\mathbf c}\) and one after the final entry
\(\pi_m^{c_m}\).

We place vertical bars into these gaps according to the following
rule: every gap indexed by an ascent
\(i\in\operatorname{ASC}(\pi^{\mathbf{c}})\)
contains at least one bar, while all other gaps may contain any
nonnegative number of bars.
Let \(\mathcal{B}(\pi^{\mathbf{c}})\) denote the collection of all
barred permutations constructed from \(\pi^{\mathbf{c}}\) under this
rule and define
\[
\mathcal{B}
=
\bigcup_{\pi^{\mathbf{c}}
\in\mathfrak{S}_{\mathbf{m}}^{\mathbf{r}}}
\mathcal{B}(\pi^{\mathbf{c}}).
\]

Given any barred permutation
\(\theta\in\mathcal{B}(\pi^{\mathbf{c}})\),
let \(\nu_0,\ldots,\nu_m\) be the numbers of bars placed in the
\(m+1\) gaps, ordered from left to right.
Assign to \(\theta\) the monomial weight
\[
\operatorname{wt}(\theta)
=
x^{\nu_0+\nu_1+\cdots+\nu_m}.
\]

Fix a colored multiset permutation
\(\pi^{\mathbf{c}}\).
A barred permutation on \(\pi^{\mathbf{c}}\) can be obtained by first
inserting one bar in every ascent gap and then inserting an arbitrary
number of additional bars in each of the \(m+1\) gaps. Therefore,
\[
\sum_{\theta\in\mathcal{B}(\pi^{\mathbf{c}})}
\operatorname{wt}(\theta)
=
x^{\operatorname{asc}(\pi^{\mathbf{c}})}
\left(\sum_{\ell\ge0}x^\ell\right)^{m+1}
=
\frac{x^{\operatorname{asc}(\pi^{\mathbf{c}})}}
     {(1-x)^{m+1}}.
\]
Summing this expression over all colored multiset permutations
\(\pi^{\mathbf{c}}\in
\mathfrak{S}_{\mathbf{m}}^{\mathbf{r}}\), we obtain
\begin{equation}\label{eq:first-barred-count}
\sum_{\theta\in\mathcal{B}}
\operatorname{wt}(\theta)
=
\frac{S_{\mathbf{m}}^{\mathbf{r}}(x)}
     {(1-x)^{m+1}}.
\end{equation}

We now evaluate the same total weight by grouping barred permutations
according to their total number of bars.
Let \(\mathcal{B}_n\subseteq\mathcal{B}\) denote the set of barred
permutations containing exactly \(n\) bars.
The \(n\) bars break the word into \(n+1\) segments, which may be
empty.
Since every ascent gap contains at least one bar, no segment can
contain an ascent.
Thus, all entries within each segment appear in weakly decreasing
order with respect to the fixed color order.

Each barred permutation in \(\mathcal{B}_n\) can be constructed by
first placing \(n\) bars in a line and then inserting the colored
copies of the letters into the \(n+1\) spaces determined by the bars.

We first consider the letter \(1\).
Since the sentinel \(1^1\) precedes the leftmost space and \(1^1\)
is the smallest element in the color order, only copies of \(1^1\)
may be inserted into this space.
This gives one possible choice.
For each of the remaining \(n\) spaces, the letter \(1\) may carry
any of the \(r_1\) available colors, giving \(r_1n\) further choices.
Thus, the \(m_1\) identical copies of \(1\) are chosen from
\(r_1n+1\) options, with repetition allowed.
The number of possible choices is therefore
\[
\binom{(r_1n+1)+m_1-1}{m_1}
=
\binom{r_1n+m_1}{m_1}.
\]

Now let \(i\in\{2,\ldots,d\}\).
The leftmost space is not available for copies of \(i\).
For each of the remaining \(n\) spaces, the letter \(i\) may carry
any of its \(r_i\) available colors.
Hence, the \(m_i\) identical copies of \(i\) are chosen from
\(r_in\) options, with repetition allowed.
The number of possible choices is
\[
\binom{r_in+m_i-1}{m_i}.
\]

The choices for distinct letters are mutually independent.
Moreover, once these choices are made, the weakly decreasing order
within each segment determines the barred permutation uniquely.
Therefore,
\[
\left|\mathcal{B}_n\right|
=
\binom{r_1n+m_1}{m_1}
\prod_{i=2}^{d}
\binom{r_in+m_i-1}{m_i}.
\]
Since every barred permutation in \(\mathcal{B}_n\) has weight
\(x^n\), summing over all nonnegative integers \(n\) gives
\begin{equation}\label{eq:second-barred-count}
\sum_{\theta\in\mathcal{B}}
\operatorname{wt}(\theta)
=
\sum_{n\ge0}
\binom{r_1n+m_1}{m_1}
\prod_{i=2}^{d}
\binom{r_in+m_i-1}{m_i}
x^n.
\end{equation}

We equate the two expressions
\eqref{eq:first-barred-count} and
\eqref{eq:second-barred-count} and the proof follows.
\end{proof}

\begin{proof}[Proof of Theorem~\ref{Sgeo}.]
Combining the definition of $Q_{\mathbf{m}}^{\mathbf{r}}$ with the formulas for the Ehrhart polynomial  of closed and half-open simplices,
we get
\[
\mathcal{L}(Q_{\mathbf{m}}^{\mathbf{r}};x)
= \mathcal{L}(r_1\Delta_{m_1};x)\prod_{j=2}^d\mathcal{L}(r_j\widetilde{\Delta}_{m_j};x)
= \binom{r_1 x+m_1}{m_1}\prod_{j=2}^d\binom{r_j x+m_j-1}{m_j}.
\]
Substituting into the generating function identity \eqref{eq:Sgf} gives
\[
\frac{S_{\mathbf{m}}^{\mathbf{r}}(x)}{(1-x)^{m+1}}
= \sum_{n\ge 0}\mathcal{L}(Q_{\mathbf{m}}^{\mathbf{r}};n)x^n.
\]
The conclusion now follows from the definition of the $h^*$-polynomial for half-open polytopes.
\end{proof}
\begin{proof}[Proof of Theorem~\ref{thm:Sself}.]
By Theorems~\ref{thm:Sgf} and~\ref{Sgeo},
\(Q_{\mathbf{m}}^{\mathbf{r}}\) has the Ehrhart polynomial
\[
\mathcal{L}(Q_{\mathbf{m}}^{\mathbf{r}};x)
= \binom{r_1x+m_1}{m_1}
  \prod_{j=2}^d\binom{r_jx+m_j-1}{m_j}.
\]

We first show that
\(\mathcal{L}(Q_{\mathbf{m}}^{\mathbf{r}};x)\)
expands nonnegatively in the magic basis. Indeed,
\begin{align*}
\mathcal{L}(Q_{\mathbf{m}}^{\mathbf{r}};x)
&= \binom{r_1x+m_1}{m_1}
   \prod_{j=2}^d\binom{r_jx+m_j-1}{m_j} \\
&= \prod_{k=0}^{m_1-1}
   \left(\frac{r_1}{m_1-k}x+1\right)
   \prod_{j=2}^{d}
   \left[
   \frac{r_j}{m_j}x
   \prod_{k=1}^{m_j-1}
   \left(\frac{r_j}{m_j-k}x+1\right)
   \right] \\
&= \prod_{k=0}^{m_1-1}
   \left[
   \left(\frac{r_1}{m_1-k}-1\right)x+(x+1)
   \right] \\
&\qquad {}\times
   \prod_{j=2}^{d}
   \left[
   \frac{r_j}{m_j}x
   \prod_{k=1}^{m_j-1}
   \left(
   \left(\frac{r_j}{m_j-k}-1\right)x+(x+1)
   \right)
   \right].
\end{align*}
If \(r_j\ge m_j+1\) for all \(j\) (or even \(r_j\ge m_j\)),
then \(\mathcal{L}(Q_{\mathbf{m}}^{\mathbf{r}};x)\) is magic positive.
Hence,
\[
\mathcal{E}
\bigl(
\mathcal{L}(Q_{\mathbf{m}}^{\mathbf{r}};x)
\bigr)
\in\mathcal{E}_m.
\]
By Lemma~\ref{BS21}, it therefore suffices to find a polynomial
\(p\in\mathcal{A}_m\) such that
\(
p\preceq
\mathcal{E}
\bigl(
\mathcal{L}(Q_{\mathbf{m}}^{\mathbf{r}};x)
\bigr).
\)
By~\cite[Corollary~3.9 and the discussion preceding Lemma~3.10]{DHS26},
\(
\mathcal{E}
\bigl(
\mathcal{L}(P_{\mathbf{m}}^{\mathbf{m}+\mathbf{1}};x)
\bigr)
\in\mathcal{A}_m.
\)
By~\cite[Theorem~4.6]{Bra06}, if two polynomials $p$ and $q$
have roots
\(
\alpha_1\leq\alpha_2\leq\cdots\leq\alpha_m\)
and
\(\beta_1\leq\beta_2\leq\cdots\leq\beta_m,
\)
respectively, all lying in the interval $[-1,0]$, and
$\alpha_k\leq\beta_k$ for all $k\in[m]$, then
$\mathcal{E}(p)\preceq\mathcal{E}(q)$.

We have
\[
\mathcal{L}
\bigl(
P_{\mathbf{m}}^{\mathbf{m}+\mathbf{1}};x
\bigr)
=
\prod_{j=1}^{d}
\binom{(m_j+1)x+m_j}{m_j}.
\]
Thus, its roots are
\[
\alpha_{jk}
=
-\frac{k}{m_j+1},
\qquad
j=1,\ldots,d,\quad k=1,\ldots,m_j.
\]
On the other hand,
the roots of
\(\mathcal{L}(Q_{\mathbf{m}}^{\mathbf{r}};x)\)
are given by
\[
\beta_{jk}
=
\begin{cases}
-\dfrac{k}{r_1},
& j=1,\quad k=1,\ldots,m_1,\\[6pt]
-\dfrac{k-1}{r_j},
& j\ge2,\quad k=1,\ldots,m_j.
\end{cases}
\]
All these roots lie in \([-1,0]\).

For \(j=1\), the assumption \(r_1\ge m_1+1\) gives
\[
\alpha_{1k}
=
-\frac{k}{m_1+1}
\le
-\frac{k}{r_1}
=
\beta_{1k}.
\]
For \(j\ge2\), since \(r_j\ge m_j+1\), we have
\[
kr_j
\ge k(m_j+1)
\ge (k-1)(m_j+1),
\]
and hence
\[
\alpha_{jk}
=
-\frac{k}{m_j+1}
\le
-\frac{k-1}{r_j}
=
\beta_{jk}.
\]
Therefore, the two multisets of roots can be paired so that
\(\alpha_{jk}\le\beta_{jk}\) for all \(j,k\). It follows from
\cite[Theorem~4.6]{Bra06} that
\[
\mathcal{E}
\bigl(
\mathcal{L}
(P_{\mathbf{m}}^{\mathbf{m}+\mathbf{1}};x)
\bigr)
\preceq
\mathcal{E}
\bigl(
\mathcal{L}
(Q_{\mathbf{m}}^{\mathbf{r}};x)
\bigr).
\]
Since
\(
\mathcal{E}
\bigl(
\mathcal{L}
(P_{\mathbf{m}}^{\mathbf{m}+\mathbf{1}};x)
\bigr)
\in\mathcal{A}_m
\)
and
\(
\mathcal{E}
\bigl(
\mathcal{L}
(Q_{\mathbf{m}}^{\mathbf{r}};x)
\bigr)
\in\mathcal{E}_m,
\)
Lemma~\ref{BS21} yields
\[
\mathcal{E}
\bigl(
\mathcal{L}
(Q_{\mathbf{m}}^{\mathbf{r}};x)
\bigr)
\in\mathcal{A}_m.
\]

Recall that the \emph{\(h\)-polynomial}
\(h(x)\in\mathbb{R}[x]\) associated with a polynomial
\(i(x)\in\mathbb{R}[x]\) of degree \(m\) is defined by
\[
\sum_{n\ge0}i(n)x^n
=
\frac{h(x)}{(1-x)^{m+1}},
\]
and its \emph{\(f\)-polynomial} is given by
\[
f(h;x)
=
(1+x)^m h\left(\frac{x}{1+x}\right).
\]
By~\cite[Lemma~2.7]{BS21},
\(
\mathcal{E}(i)(x)=f(h;x).
\)
Applying this identity to~\eqref{eq:Sgf}, with
\(
i(n)=\mathcal{L}(Q_{\mathbf{m}}^{\mathbf{r}};n)
\) and
\(h(x)=S_{\mathbf{m}}^{\mathbf{r}}(x),
\)
gives
\[
f(S_{\mathbf{m}}^{\mathbf{r}};x)
=
\mathcal{E}
\bigl(
\mathcal{L}
(Q_{\mathbf{m}}^{\mathbf{r}};x)
\bigr).
\]
Since
\(
\mathcal{E}
\bigl(
\mathcal{L}
(Q_{\mathbf{m}}^{\mathbf{r}};x)
\bigr)
\in\mathcal{A}_m,
\)
we have
\[
\mathcal{R}_m
\bigl(
f(S_{\mathbf{m}}^{\mathbf{r}};x)
\bigr)
\preceq
f(S_{\mathbf{m}}^{\mathbf{r}};x).
\]
By~\cite[Lemma~2.2]{BS21},
\[
\mathcal{R}_m
\bigl(
f(S_{\mathbf{m}}^{\mathbf{r}};x)
\bigr)
=
f
\bigl(
\mathcal{I}_m(S_{\mathbf{m}}^{\mathbf{r}});x
\bigr).
\]
It follows that
\[
f
\bigl(
\mathcal{I}_m(S_{\mathbf{m}}^{\mathbf{r}});x
\bigr)
\preceq
f(S_{\mathbf{m}}^{\mathbf{r}};x).
\]

Finally, the transformation
\[
h(x)\longmapsto
f(h;x)
=
(1+x)^m h\left(\frac{x}{1+x}\right)
\]
is induced on the roots by the fractional-linear map:
\(
u\longmapsto\frac{u}{1-u},
\)
which is strictly increasing on \((-\infty,0]\).
Consequently, this transformation preserves and reflects
interlacing. We conclude that
\[
\mathcal{I}_m(S_{\mathbf{m}}^{\mathbf{r}})
\preceq
S_{\mathbf{m}}^{\mathbf{r}},
\]
as desired.
\end{proof}
\begin{cor}\label{Sprops}
If \(r_j\ge m_j\) for all $j\in [d]$,
then \(S_{\mathbf{m}}^{\mathbf{r}}\) is real-rooted,
log-concave,
unimodal.
Moreover,
if \(r_j\ge m_j+1\) for all $j\in [d]$,
then \(S_{\mathbf{m}}^{\mathbf{r}}\) is alternatingly increasing and bi-\(\gamma\)-positive.
\end{cor}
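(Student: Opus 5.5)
The corollary has two parts, and I treat them separately. The first part assumes only $r_j\ge m_j$. For this part I would rerun the proof of Theorem~\ref{thm:Sself}, keeping only the real-rootedness and dropping the interlacing comparison.

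The factorization displayed in that proof shows that every root of $\mathcal{L}(Q_{\mathbf{m}}^{\mathbf{r}};x)$ is real and lies in $[-1,0]$. These roots are $-k/r_1$ for $1\le k\le m_1$, and $-(k-1)/r_j$ for $j\ge 2$ and $1\le k\le m_j$. When $r_j\ge m_j$, all of them indeed lie in $[-1,0]$.

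Brändén's theorem~\cite{Bra06} implies that $\mathcal{E}$ sends a polynomial with all roots in $[-1,0]$ to a real-rooted polynomial with roots in $[-1,0]$. This is the special case $p=q$ of the Theorem~4.6 statement cited in the proof of Theorem~\ref{thm:Sself}, since $\mathcal{E}(p)\preceq\mathcal{E}(p)$ presupposes real-rootedness. Hence $f(S_{\mathbf{m}}^{\mathbf{r}};x)=\mathcal{E}(\mathcal{L}(Q_{\mathbf{m}}^{\mathbf{r}};x))$ is real-rooted with nonpositive roots.

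The map $u\mapsto u/(1+u)$ inverts the change of variables $h\mapsto f(h;x)$ and sends $[-1,0]$ into $(-\infty,0]$. So $S_{\mathbf{m}}^{\mathbf{r}}$ has only real, nonpositive roots. One caveat: a root of $f$ at $-1$ corresponds to a drop in degree of $h$, which is harmless. Since $S_{\mathbf{m}}^{\mathbf{r}}$ has nonnegative coefficients by definition, Newton's inequalities give log-concavity. Together with positivity and the absence of internal zeros (roots $\le 0$), this yields unimodality.

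For the second part, assume $r_j\ge m_j+1$. Theorem~\ref{thm:Sself} gives $\mathcal{I}_m(S_{\mathbf{m}}^{\mathbf{r}})\preceq S_{\mathbf{m}}^{\mathbf{r}}$. Write the $\mathcal{I}_m$-decomposition $S_{\mathbf{m}}^{\mathbf{r}}=a+xb$. I would then invoke the known result of~\cite{BS21} (see also~\cite{DBZ24}): if a polynomial $p$ with nonnegative coefficients satisfies $\mathcal{I}_d(p)\preceq p$, then $a$ and $b$ are real-rooted with nonnegative coefficients, and $b\preceq a$.

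Since $a$ and $b$ are symmetric and real-rooted with nonnegative coefficients, each is $\gamma$-positive by the fact recalled in Section~2. This gives bi-$\gamma$-positivity. The same facts make $a$ and $b$ unimodal with nonnegative coefficients, so by the characterization in Section~2, $S_{\mathbf{m}}^{\mathbf{r}}$ is alternatingly increasing.

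The main obstacle is the nonnegativity of $a$ and $b$, which is what the cited self-interlacing lemma provides. If needed, I would verify it via the formulas $a=(p-x\,\mathcal{I}_m(p))/(1-x)$ and $b=(\mathcal{I}_m(p)-p)/(1-x)$ together with interlacing.
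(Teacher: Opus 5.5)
Your proposal is correct and follows the paper's proof. In the first part, the roots of $\mathcal{L}(Q_{\mathbf{m}}^{\mathbf{r}};x)$ lying in $[-1,0]$ is equivalent, for this factored product, to the magic positivity the paper invokes; Brändén's theorem on $\mathcal{E}$ then finishes it, and your explicit transfer back to $S_{\mathbf{m}}^{\mathbf{r}}$ via $u\mapsto u/(1+u)$ fills in a step the paper leaves implicit. In the second part you apply the same decomposition result from \cite{BS21,DBZ24} to Theorem~\ref{thm:Sself}, exactly as the paper does.
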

\begin{proof}
From the proof of Theorem~\ref{thm:Sself},
the Ehrhart polynomial $\mathcal{L}(Q_{\mathbf{m}}^{\mathbf{r}};x)$ is magic positive when  \(r_j\ge m_j\) for all $j$.
A theorem of Br\"and\'en \cite[Theorem 4.2]{Bra06} implies in this situation that the $h^*$-polynomials $S_{\mathbf{m}}^{\mathbf{r}}$ have only real roots,
which implies log-concavity and unimodality.
When \(r_j\ge m_j+1\) holds for all $j$,
we apply the discussion following Lemma 3.1 in \cite{DBZ24}.
If a polynomial $p$ of degree at most $m$ with nonnegative coefficients satisfies \(\mathcal{I}_m(p)\preceq p\),
then its \(\mathcal{I}_m\)-decomposition has nonnegative coefficients and interlacing roots.
Applying this to \(p=S_{\mathbf{m}}^{\mathbf{r}}\) and using Theorem~\ref{thm:Sself},
we obtain the alternatingly increasing property and bi-\(\gamma\)-positivity.
\end{proof}
\section{Real-Rootedness}\label{4}
\hspace*{\parindent}
This section proves Theorems \ref{rS} and \ref{rA}.
The following two statements provide recurrences for $S_{\mathbf{m}}^{\mathbf{r}}$ and $A_{\mathbf{m}}^{\mathbf{r}}$;
they are crucial for these two theorems.

\begin{prop}\label{reS}
Increasing the $j$-th multiplicity by one gives the following recurrence for $S_{\mathbf{m}}^{\mathbf{r}}$.

If $j=1$, then
\begin{equation}\label{S1}
S_{\mathbf{m}_{+1}}^{\mathbf{r}}(x)
= \left(1 + \left(\frac{r_1(m+1)}{m_1+1}-1\right)x\right)S_{\mathbf{m}}^{\mathbf{r}}(x)
+ \frac{r_1}{m_1+1}x(1-x)(S_{\mathbf{m}}^{\mathbf{r}}(x))^{\prime}.
\end{equation}

If $j\ge 2$, then
\begin{equation}\label{S2}
S_{\mathbf{m}_{+j}}^{\mathbf{r}}(x)
= \left(\frac{m_j}{m_j+1} + \frac{r_j(m+1)-m_j}{m_j+1}x\right)S_{\mathbf{m}}^{\mathbf{r}}(x)
+ \frac{r_j}{m_j+1}x(1-x)(S_{\mathbf{m}}^{\mathbf{r}}(x))^{\prime}.
\end{equation}
\end{prop}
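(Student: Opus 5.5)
Both recurrences will be derived from the generating function identity \eqref{eq:Sgf} of Theorem~\ref{thm:Sgf}, rather than from a direct insertion argument on words. Write
\[
F_{\mathbf m}(x)=\frac{S_{\mathbf m}^{\mathbf r}(x)}{(1-x)^{m+1}}=\sum_{n\ge0}c_{\mathbf m}(n)x^n,\qquad
c_{\mathbf m}(n)=\binom{r_1 n+m_1}{m_1}\prod_{i=2}^d\binom{r_i n+m_i-1}{m_i}.
\]
The key observation is that raising one multiplicity multiplies the coefficient by a linear factor in $n$. For $j=1$,
\[
\binom{r_1n+m_1+1}{m_1+1}=\frac{r_1n+m_1+1}{m_1+1}\binom{r_1n+m_1}{m_1},
\]
so $c_{\mathbf m_{+1}}(n)=\frac{r_1n+m_1+1}{m_1+1}c_{\mathbf m}(n)$. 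For $j\ge2$,
\[
\binom{r_jn+m_j}{m_j+1}=\frac{r_jn+m_j}{m_j+1}\binom{r_jn+m_j-1}{m_j},
\]
so $c_{\mathbf m_{+j}}(n)=\frac{r_jn+m_j}{m_j+1}c_{\mathbf m}(n)$. In both cases the factor has the form $\alpha n+\beta$, with $\alpha=r_j/(m_j+1)$ and $\beta=1$ for $j=1$, $\beta=m_j/(m_j+1)$ for $j\ge2$.

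Multiplying the $n$-th coefficient by $n$ corresponds to the operator $x\frac{d}{dx}$. Hence
\[
F_{\mathbf m_{+j}}(x)=\alpha\,xF_{\mathbf m}'(x)+\beta F_{\mathbf m}(x).
\]
Next substitute $F_{\mathbf m}=S/(1-x)^{m+1}$, where $S=S^{\mathbf r}_{\mathbf m}$:
\[
xF_{\mathbf m}'=\frac{xS'}{(1-x)^{m+1}}+\frac{(m+1)xS}{(1-x)^{m+2}}.
\]
Multiplying through by $(1-x)^{m+2}$, which is the correct power since $\mathbf m_{+j}$ has total size $m+1$, gives
\[
S^{\mathbf r}_{\mathbf m_{+j}}=\beta(1-x)S+\alpha(m+1)xS+\alpha x(1-x)S'.
\]
Collecting the linear coefficient of $S$ gives $\beta+\bigl(\alpha(m+1)-\beta\bigr)x$.

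For $j=1$ this is $1+\bigl(\frac{r_1(m+1)}{m_1+1}-1\bigr)x$, which matches \eqref{S1}. For $j\ge2$ it is $\frac{m_j}{m_j+1}+\frac{r_j(m+1)-m_j}{m_j+1}x$, which matches \eqref{S2}. In both cases the derivative term is $\frac{r_j}{m_j+1}x(1-x)S'$, as stated.

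No step is a genuine obstacle; the only point needing care is the bookkeeping of the exponent $m+1$ versus $m+2$ when clearing denominators.
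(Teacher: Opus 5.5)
Your proposal is correct and takes essentially the same route as the paper. Both arguments read off from \eqref{eq:Sgf} that raising $m_j$ multiplies the $n$-th coefficient by a linear factor $\alpha n+\beta$, translate this into $F_{\mathbf m_{+j}}=\beta F_{\mathbf m}+\alpha x F_{\mathbf m}'$, and clear denominators with $(1-x)^{m+2}$. Your version writes out the final ``substitute and simplify'' step that the paper leaves implicit, and the coefficients you obtain check out.
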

\begin{proof}
For $j=1$, define the generating functions
\[
F(x)=\frac{S_{\mathbf{m}_{+1}}^{\mathbf{r}}(x)}{(1-x)^{m+2}},\qquad
G(x)=\frac{S_{\mathbf{m}}^{\mathbf{r}}(x)}{(1-x)^{m+1}}.
\]
By \eqref{eq:Sgf},
\[
F(x)=\sum_{n \ge 0} \binom{r_1 n + m_1+1}{m_1+1} \prod_{i=2}^d \binom{r_i n + m_i - 1}{m_i} x^n,
\]
\[
G(x)=\sum_{n \ge 0} \binom{r_1 n + m_1}{m_1} \prod_{i=2}^d \binom{r_i n + m_i - 1}{m_i} x^n.
\]
Only the first binomial factor is updated from $\binom{r_1 n+m_1}{m_1}$ to $\binom{r_1 n+m_1+1}{m_1+1}$.
Using the identity
\[
\binom{r_1 n+m_1+1}{m_1+1}
= \left(1+\frac{r_1}{m_1+1}n\right)\binom{r_1 n+m_1}{m_1},
\]
we obtain
\[
F(x)=G(x)+\frac{r_1}{m_1+1}x\,G'(x).
\]
We substitute the expressions of $F(x)$ and $G(x)$ in terms of $S_{\mathbf{m}_{+1}}^{\mathbf{r}}$ and $S_{\mathbf{m}}^{\mathbf{r}}$, multiply both sides by $(1-x)^{m+2}$, expand the derivative term, and simplify to recover \eqref{S1}.

For $j\ge 2$, let
\[
F_1(x)=\frac{S_{\mathbf{m}_{+j}}^\mathbf{r}(x)}{(1-x)^{m+2}}
=\sum_{n \ge 0} \binom{r_1 n + m_1}{m_1}
\left(\prod_{\substack{i=2\\i\ne j}}^d \binom{r_i n + m_i - 1}{m_i}\right)
\binom{r_j n + m_j}{m_j+1} x^n.
\]
Here,
only the $j$-th factor changes from $\binom{r_j n+m_j-1}{m_j}$ to $\binom{r_j n+m_j}{m_j+1}$.
The identity
\[
\binom{r_j n+m_j}{m_j+1}
= \left(\frac{m_j}{m_j+1}+\frac{r_j}{m_j+1}n\right)\binom{r_j n+m_j-1}{m_j}
\]
gives
\[
F_1(x)=\frac{m_j}{m_j+1}G(x)+\frac{r_j}{m_j+1}x\,G'(x).
\]
Substituting and simplifying yields (\ref{S2}).
\end{proof}
\begin{proof}[Proof of Theorem~\ref{rS}]
By Corollary~\ref{Sprops}, $S_{\mathbf{1}}^{\mathbf{r}}$ is real-rooted for every $\mathbf{r}\in\mathbb{Z}_{>0}^d$.
Given the recurrence from Proposition~\ref{reS} and Theorem~\ref{LW07}, the result follows by induction.
\end{proof}
Similarly,
we can derive a recurrence relation for $A_{\mathbf{m}}^{\mathbf{r}}$.
\begin{prop}\label{reA}
Increasing the $j$-th multiplicity by one gives
\begin{equation}\label{A2}
A_{\mathbf{m}_{+j}}^{\mathbf{r}}(x)
= \left(1+\left(\frac{r_j(m+1)}{m_j+1}-1\right)x\right)A_{\mathbf{m}}^{\mathbf{r}}(x)
+\frac{r_j}{m_j+1}x(1-x)(A_{\mathbf{m}}^{\mathbf{r}}(x))^{\prime}.
\end{equation}
\end{prop}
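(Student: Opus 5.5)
We follow the proof of Proposition~\ref{reS} (case $j=1$), now starting from the generating function identity~\eqref{A1} of Deligeorgaki, Han and Solus instead of~\eqref{eq:Sgf}. Set
\[
G(x)=\frac{A_{\mathbf{m}}^{\mathbf{r}}(x)}{(1-x)^{m+1}}=\sum_{n\ge0}\prod_{i=1}^d\binom{r_in+m_i}{m_i}x^n,
\qquad
F(x)=\frac{A_{\mathbf{m}_{+j}}^{\mathbf{r}}(x)}{(1-x)^{m+2}}.
\]
Applying \eqref{A1} to $\mathbf{m}_{+j}$, the total multiplicity becomes $m+1$. In the coefficient of $x^n$, only the $j$-th factor changes, from $\binom{r_jn+m_j}{m_j}$ to $\binom{r_jn+m_j+1}{m_j+1}$. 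The elementary identity
\[
\binom{r_jn+m_j+1}{m_j+1}=\frac{r_jn+m_j+1}{m_j+1}\binom{r_jn+m_j}{m_j}=\Bigl(1+\frac{r_j}{m_j+1}n\Bigr)\binom{r_jn+m_j}{m_j}
\]
then shows, coefficientwise, that
\[
F(x)=G(x)+\frac{r_j}{m_j+1}\,xG'(x),
\]
since multiplying the $n$-th coefficient by $n$ corresponds to applying $x\frac{d}{dx}$.

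Next we differentiate $G=A_{\mathbf{m}}^{\mathbf{r}}(1-x)^{-(m+1)}$:
\[
G'(x)=\frac{(A_{\mathbf{m}}^{\mathbf{r}})'(x)}{(1-x)^{m+1}}+\frac{(m+1)A_{\mathbf{m}}^{\mathbf{r}}(x)}{(1-x)^{m+2}}.
\]
Multiplying the relation $F=G+\frac{r_j}{m_j+1}xG'$ by $(1-x)^{m+2}$ gives
\[
A_{\mathbf{m}_{+j}}^{\mathbf{r}}=(1-x)A_{\mathbf{m}}^{\mathbf{r}}+\frac{r_j(m+1)}{m_j+1}\,xA_{\mathbf{m}}^{\mathbf{r}}+\frac{r_j}{m_j+1}\,x(1-x)(A_{\mathbf{m}}^{\mathbf{r}})'.
\]
Collecting the terms $(1-x)+\frac{r_j(m+1)}{m_j+1}x=1+\bigl(\frac{r_j(m+1)}{m_j+1}-1\bigr)x$ yields exactly~\eqref{A2}.

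No step is a real obstacle. The only point needing care is the bookkeeping of exponents: the denominator for $\mathbf{m}_{+j}$ is $(1-x)^{m+2}$, and the factor $(1-x)^{-(m+1)}$ in $G$ is what produces the coefficient $m+1$ in front of $x$. Unlike Proposition~\ref{reS}, all letters play symmetric roles in~\eqref{A1}, so no case distinction between $j=1$ and $j\ge2$ is needed.
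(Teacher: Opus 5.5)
Your proof is correct and follows the paper's argument exactly: you start from \eqref{A1}, use $\binom{r_jn+m_j+1}{m_j+1}=\bigl(1+\frac{r_j}{m_j+1}n\bigr)\binom{r_jn+m_j}{m_j}$ to obtain $F=G+\frac{r_j}{m_j+1}xG'$, and then multiply through by $(1-x)^{m+2}$. You also carry out the differentiation and simplification that the paper only summarizes as ``substituting and simplifying,'' and that computation is right.
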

\begin{proof}
Let
\[
I(x)=\frac{A_{\mathbf{m}_{+j}}^{\mathbf{r}}(x)}{(1-x)^{m+2}},\]
\[H(x)=\frac{A_{\mathbf{m}}^{\mathbf{r}}(x)}{(1-x)^{m+1}}.
\]
By \eqref{A1},
\[
I(x)=\sum_{n\ge 0}
\left(\prod_{\substack{i=1\\i\ne j}}^d \binom{r_i n+m_i}{m_i}\right)
\binom{r_j n+m_j+1}{m_j+1}
x^n.
\]
When increasing $m_j$ by one,
only the $j$-th factor changes and becomes $\binom{r_j n+m_j+1}{m_j+1}$ instead of $\binom{r_j n+m_j}{m_j}$. Using
\[
\binom{r_j n+m_j+1}{m_j+1}
= \left(1+\frac{r_j}{m_j+1}n\right)\binom{r_j n+m_j}{m_j},
\]
we get
\[
I(x)=H(x)+\frac{r_j}{m_j+1}xH'(x).
\]
Substituting and simplifying yields (\ref{A2}).
\end{proof}

\begin{remark}\label{baseA}
The proof of \cite[Theorem 3.11]{DHS26} implies the weaker statement that $A_{\mathbf{m}}^{\mathbf{r}}$ is real-rooted if $r_j\geq m_j$ for every $j$.
\end{remark}
\begin{proof}[Proof of Theorem~\ref{rA}]
By Remark~\ref{baseA}, $A_{\mathbf{1}}^{\mathbf{r}}$ is real-rooted for every positive integer vector $\mathbf{r}\in\mathbb{Z}_{>0}^d$.
Using the recurrence in Proposition~\ref{reA} and Theorem~\ref{LW07},
the proof follows by induction.
\end{proof}
\section{Applications}\label{5}
\hspace*{\parindent}
The set $\mathfrak{S}_{\mathbf{m}}^{\mathbf{r}}$ of colored multiset permutations was previously defined only for positive multiplicity vectors $\mathbf{m}\in\mathbb{Z}_{>0}^d$. In this section, we extend its definition to all nonnegative integer vectors $\mathbf{m}=\left(m_1,\dots,m_d\right)\in\mathbb{N}^d$ with $m=m_1+m_2+\dots+m_d$ and denote this extended collection by $\widetilde{\mathfrak{S}}_{\mathbf{m}}^{\mathbf{r}}$.
Each element of
$\widetilde{\mathfrak{S}}_{\mathbf{m}}^{\mathbf{r}}$
is a colored word
\(
w^{\mathbf{c}}
=
w_1^{c_1}w_2^{c_2}\cdots w_m^{c_m},
\)
where $w_1w_2\cdots w_m\in[d]^m$ and each
$i\in[d]$ appears exactly $m_i$ times.
The coloring rule $c_{i}\in [r_{w_i}]$ and the sentinel element $w_0^{c_0}=1^1$ are defined in the same way as in Section 1.
For any index $j\in[0,m-1]$, we say that $j$ is an \emph{ascent} of $w^{\mathbf{c}}$ if $w_j^{c_j}<w_{j+1}^{c_{j+1}}$, and we define $\operatorname{asc}(w^{\mathbf{c}})$ as the number of ascents.

Let $\mathcal{G}\subseteq\mathbb{N}^d$ be an arbitrary set of vectors with coordinate sum equal to $m$.
We define the set of extended colored words
\[
\mathcal{W}_{\mathcal{G}}^{\mathbf{r}} = \bigcup_{\mathbf{m}\in\mathcal{G}} \widetilde{\mathfrak{S}}_{\mathbf{m}}^{\mathbf{r}}.
\]

The following statement is a consequence of  Theorem~\ref{thm:Sgf}.
\begin{cor}\label{cor:wr}
Let $\mathcal{G}\subseteq\mathbb{N}^d$ be as above and let
$\mathbf{r}=(r_1,r_2,\dots,r_d)\in\mathbb{Z}_{>0}^d$ be a positive integer vector.
Then,
\[
\frac{\displaystyle\sum_{w^{\mathbf{c}} \in \mathcal{W}_{\mathcal{G}}^{\mathbf{r}}} x^{\operatorname{asc}(w^{\mathbf{c}})}}{(1-x)^{m+1}}
=
\sum_{n\ge0}
\left(
\sum_{\mathbf{m}\in\mathcal{G}}
\binom{r_1 n+m_1}{m_1}
\prod_{i=2}^d
\binom{r_in+m_i-1}{m_i}
\right)x^n.
\]
\end{cor}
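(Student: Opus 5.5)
The plan is to obtain Corollary~\ref{cor:wr} by summing the identity of Theorem~\ref{thm:Sgf} over $\mathbf{m}\in\mathcal{G}$. Since $\mathcal{W}_{\mathcal{G}}^{\mathbf{r}}$ is a disjoint union of the sets $\widetilde{\mathfrak{S}}_{\mathbf{m}}^{\mathbf{r}}$ (different $\mathbf m$ give different letter contents), the numerator on the left equals $\sum_{\mathbf m\in\mathcal G}\sum_{w^{\mathbf c}\in\widetilde{\mathfrak{S}}_{\mathbf{m}}^{\mathbf{r}}}x^{\operatorname{asc}(w^{\mathbf c})}$. Every $\mathbf m\in\mathcal G$ has the same coordinate sum $m$, so each summand shares the denominator $(1-x)^{m+1}$. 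By linearity, it therefore suffices to prove, for each fixed $\mathbf m\in\mathbb N^d$,
\[
\frac{\sum_{w^{\mathbf c}\in\widetilde{\mathfrak S}_{\mathbf m}^{\mathbf r}}x^{\operatorname{asc}(w^{\mathbf c})}}{(1-x)^{m+1}}
=\sum_{n\ge0}\binom{r_1n+m_1}{m_1}\prod_{i=2}^d\binom{r_in+m_i-1}{m_i}x^n .
\]

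The only new point is that Theorem~\ref{thm:Sgf} was stated for $\mathbf m\in\mathbb Z_{>0}^d$, while here zero entries are allowed. I would rerun the barred-permutation argument from the proof of Theorem~\ref{thm:Sgf} verbatim. That argument never uses positivity of the $m_i$. One counts barred words with $n$ bars in two ways: first word by word, giving $x^{\operatorname{asc}}/(1-x)^{m+1}$; then letter by letter, placing the copies of each letter independently into the $n+1$ compartments. For a letter $i$ with $m_i=0$ there is exactly one (empty) choice. This matches the convention $\binom{N}{0}=1$, including the case $\binom{r_in-1}{0}=1$ when $n=0$, for $i\ge2$. The count for letter $1$ (with the extra leftmost compartment allowed only for $1^1$) and for letters $i\ge2$ is unchanged.

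The one place needing care is degenerate cases. If $m=0$, the word is empty, the sentinel alone contributes no ascents, and both sides equal $1/(1-x)$. I also need the coloring and sentinel conventions to be exactly those of Section~1, so that the comparison with the sentinel $1^1$ behaves the same way. I expect no real obstacle: the main step is simply checking that the binomial conventions for zero multiplicities agree with the combinatorial count, notably $\binom{-1}{0}=1$ at $n=0$. Summing the resulting identities over $\mathbf m\in\mathcal G$ and interchanging the finite sum with the sum over $n$ yields the corollary.
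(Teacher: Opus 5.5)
Your proposal is correct and matches the paper's proof: the paper also observes that the barred-permutation argument of Theorem~\ref{thm:Sgf} goes through unchanged for each fixed $\mathbf{m}\in\mathcal{G}\subseteq\mathbb{N}^d$, and then sums the resulting identities over $\mathcal{G}$. Your explicit checks fill in details the paper leaves implicit: the disjointness of the union, the common denominator $(1-x)^{m+1}$, and the convention $\binom{r_in-1}{0}=1$ at $n=0$ for zero multiplicities.
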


\begin{proof}
For every fixed vector $\mathbf{m}\in\mathcal{G}$,
the proof of Theorem~\ref{thm:Sgf} remains valid for the extended
colored word set
$\widetilde{\mathfrak{S}}_{\mathbf{m}}^{\mathbf{r}}$.
Summing the resulting generating function identities over all
$\mathbf{m}\in\mathcal{G}$ leads to the desired equality.
\end{proof}

As applications,
we provide combinatorial interpretations of the $h^*$-polynomials of several families of lattice polytopes.
\subsection{Pitman--Stanley polytope}
\hspace*{\parindent}
For $\mathbf a=(a_1,\ldots,a_d)\in{\mathbb N}^d$,
the {\it Pitman--Stanley polytope} is defined by
\[
PS_d(\mathbf a)
=
\left\{
(x_1,\ldots,x_d)\in\mathbb R_{\ge0}^d:
\sum_{j=1}^i x_j
\le
\sum_{j=1}^i a_j,
\quad
1\le i\le d
\right\}.
\]

Pitman and Stanley \cite[equation (33)]{PS02} proved the explicit formula for the Ehrhart polynomial
\begin{equation}\label{equ:ps}
\mathcal{L}(PS_d(\mathbf a),n)
=
\sum_{\mathbf m\in I_d}
\binom{a_1n+m_1}{m_1}
\prod_{i=2}^d
\binom{a_in+m_i-1}{m_i},
\end{equation}
where
\[
I_d
=
\left\{
(m_1,\ldots,m_d)\in\mathbb N^d:
\sum_{i=1}^d m_i=d,\;
\sum_{i=1}^k m_i\ge k,\quad
1\le k\le d
\right\}.
\]

Applying Corollary~\ref{cor:wr} to~\eqref{equ:ps} yields the following.
\begin{cor}\label{cor:PS}
Suppose $\mathbf r=(r_1,\dots,r_d)\in\mathbb Z_{>0}^d$.
We have
\[
h^*\big(PS_d(\mathbf r);x\big)
=
\sum_{w^{\mathbf{c}} \in \mathcal{W}_{I_d}^{\mathbf{r}}} x^{\operatorname{asc}(w^{\mathbf{c}})},
\]
where $\mathcal{W}_{I_d}^{\mathbf{r}}$ consists of all extended colored words $w^{\mathbf{c}}$ whose underlying word $w\in [d]^d$ has at least $k$ entries less than or equal to $k$ for every $k\in [d]$.
Equivalently, the nondecreasing rearrangement $(u_1,u_2,\dots,u_d)$ of $w$ satisfies $u_k\le k$ for every $k\in [d]$.
\end{cor}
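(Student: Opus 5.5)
The plan is to combine Pitman and Stanley's Ehrhart formula \eqref{equ:ps} with Corollary~\ref{cor:wr}, taking $\mathcal{G}=I_d$ and $\mathbf a=\mathbf r$. Every vector in $I_d$ has coordinate sum $d$, so the common length $m$ in Corollary~\ref{cor:wr} is $d$, and the right-hand side there is exactly $\sum_{n\ge0}\mathcal{L}(PS_d(\mathbf r),n)x^n$. Hence
\[
\frac{\sum_{w^{\mathbf c}\in\mathcal{W}_{I_d}^{\mathbf r}}x^{\operatorname{asc}(w^{\mathbf c})}}{(1-x)^{d+1}}=\sum_{n\ge0}\mathcal{L}(PS_d(\mathbf r),n)x^n .
\]

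Next I will check that $PS_d(\mathbf r)$ is a full-dimensional ($d$-dimensional) lattice polytope. For lattice vertices, note that it is cut out by the integral inequalities $x_j\ge0$ and $\sum_{j\le i}x_j\le\sum_{j\le i}r_j$. After the unimodular change of coordinates $y_i=\sum_{j\le i}x_j$, these become the chain-type system $0\le y_1\le y_2\le\cdots$, $y_i\le R_i$, whose constraint matrix is totally unimodular. Full-dimensionality holds because all $r_j>0$: a small positive box near the origin lies inside. With this, the definition of $h^*$ for a $d$-dimensional polytope uses the denominator $(1-x)^{d+1}$, which matches the displayed identity and gives the stated equality.

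Finally, I will translate the index set. A word $w\in[d]^d$ has content $\mathbf m$ in $I_d$ exactly when $\sum_{i\le k}m_i\ge k$ for all $k$, that is, when at least $k$ entries are $\le k$. This is the same as saying the $k$-th smallest entry $u_k$ satisfies $u_k\le k$.

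The only delicate step is the dimension and lattice check; everything else is substitution.
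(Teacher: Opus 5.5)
Your proposal is correct and follows the paper's proof. You take $\mathcal{G}=I_d$ in Corollary~\ref{cor:wr}, match the right-hand side with the Pitman--Stanley formula~\eqref{equ:ps} at $\mathbf a=\mathbf r$, and read off the $h^*$-polynomial. Your explicit checks that $PS_d(\mathbf r)$ is a full-dimensional lattice polytope, which justifies the denominator $(1-x)^{d+1}$, and your translation of $I_d$ into the condition $u_k\le k$ are details the paper leaves implicit.
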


\begin{proof}
Substituting $\mathcal{G}=I_d$ into Corollary~\ref{cor:wr}
and combining the resulting identity with equation~\eqref{equ:ps},
we get
\[
\frac{\displaystyle\sum_{w^{\mathbf{c}} \in \mathcal{W}_{I_d}^{\mathbf{r}}} x^{\operatorname{asc}(w^{\mathbf{c}})}}
     {(1-x)^{d+1}}
=
\sum_{n\ge0}
\mathcal{L}(PS_d(\mathbf r),n)x^n.
\]
The result follows from this identity and the definition of the $h^*$-polynomial.
\end{proof}

Avila, Ferroni and Morales~\cite[Theorem~1.3]{AFM26}
recently gave, more generally for
$\mathbf a\in\mathbb N^d$, a combinatorial interpretation of
$h^*(PS_d(\mathbf a);x)$ as the modified-ascent enumerator of
$\mathbf a$-parking functions.
For $\mathbf a=\mathbf r\in\mathbb Z_{>0}^d$,
Corollary~\ref{cor:PS} therefore provides an alternative combinatorial
interpretation of the same $h^*$-polynomial in terms of colored words.

\subsection{Composition polytopes}
\hspace*{\parindent}
Let $\sigma = (b_1,b_2,\dots,b_k) \models d$ be a composition of a positive integer $d$
and consider the partial sums $s_i = b_1 + b_2 + \dots + b_i$ for $i\in [k]$.
The {\it composition polytope} $\mathcal{Q}_\sigma \subset \mathbb{R}^d$ is defined by the inequalities
\[
\mathcal{Q}_\sigma = \left\{ (x_1,x_2,\dots,x_d) \,\bigg|\, x_j\ge 0,\ \forall j\in[d],\ \sum_{j=1}^{s_i} x_j \le s_i,\ \forall\, i\in [k] \right\}.
\]
Let
\[
K_\sigma = \left\{ \mathbf{m} = (m_1,\dots,m_d) \in \mathbb{N}^d \,\bigg|\, \sum_{j=1}^d m_j = d,\ \sum_{j=1}^{s_i} m_j \ge s_i,\ \forall\, i\in [k] \right\}.
\]
From \cite[Equation (13)]{Ath26}, the Ehrhart polynomial of $\mathcal{Q}_\sigma$ admits the explicit formula
\begin{equation}\label{equ:comp}
\mathcal{L}(\mathcal{Q}_\sigma;n) = \sum_{\mathbf{m} \in K_\sigma}
\binom{n+m_1}{m_1} \prod_{i=2}^d \binom{n+m_i-1}{m_i}.
\end{equation}

Given a word $w = (w_1,w_2,\dots,w_d) \in [d]^d$,
we denote by $\operatorname{asc}(w)$ the number of indices (called ascents) $i\in [d]$ such that $w_{i-1} < w_i$, where $w_0 := 1$.

\begin{cor}[{[\citen{Ath26}]}, Proposition 4.2]\label{cor:comp}
We have
\[
h^*\big(\mathcal{Q}_\sigma;x\big) = \sum_{w \in \mathcal{W}_{\sigma}} x^{\operatorname{asc}(w)},
\]
where $\mathcal{W}_{\sigma}$ consists of all words $w\in [d]^d$ which have at least $s_i$ entries less than or equal to $s_i$ for every $i\in [k]$.
Equivalently,
the unique nondecreasing rearrangement $(u_1,u_2,\dots,u_d)$ of $w$ satisfies $u_{s_i} \le s_i$ for every $i\in [k]$.
\end{cor}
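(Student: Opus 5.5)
We specialize Corollary~\ref{cor:wr} to $\mathbf r=\mathbf 1=(1,\dots,1)$ and $\mathcal G=K_\sigma$, and then use the Ehrhart formula~\eqref{equ:comp}. With every $r_i=1$, each letter has a single color, so every colored word $w^{\mathbf c}$ is just its underlying word $w\in[d]^d$. The color order then restricts to the usual order $1<2<\cdots<d$ on $[d]$. The sentinel $w_0^{c_0}=1^1$ becomes $w_0=1$. Hence $\operatorname{asc}(w^{\mathbf c})$ counts the indices $j\in[0,d-1]$ with $w_j<w_{j+1}$; after shifting indices by one, this is exactly the statistic $\operatorname{asc}(w)$ defined just before the statement. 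Since every $\mathbf m\in K_\sigma$ has coordinate sum $m=d$, Corollary~\ref{cor:wr} gives
\[
\frac{\sum_{w\in\mathcal W^{\mathbf 1}_{K_\sigma}}x^{\operatorname{asc}(w)}}{(1-x)^{d+1}}
=\sum_{n\ge0}\Biggl(\sum_{\mathbf m\in K_\sigma}\binom{n+m_1}{m_1}\prod_{i=2}^d\binom{n+m_i-1}{m_i}\Biggr)x^n .
\]

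By~\eqref{equ:comp}, the inner sum equals $\mathcal L(\mathcal Q_\sigma;n)$. The polytope $\mathcal Q_\sigma$ is full-dimensional in $\mathbb R^d$, because it contains a small cube near the origin. Its vertices are integral; this follows, for instance, from the total unimodularity of the interval-type constraint matrix, or can simply be taken as the standing setup of~\cite{Ath26}. So the $h^*$-polynomial is defined with denominator $(1-x)^{d+1}$, and comparing with the definition of $h^*$ yields $h^*(\mathcal Q_\sigma;x)=\sum_{w\in\mathcal W^{\mathbf 1}_{K_\sigma}}x^{\operatorname{asc}(w)}$.

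It remains to identify $\mathcal W^{\mathbf 1}_{K_\sigma}$ with $\mathcal W_\sigma$. A word $w\in[d]^d$ has content vector $\mathbf m$, where $m_j$ is the number of entries equal to $j$. Then $\sum_{j\le s_i}m_j$ is the number of entries that are at most $s_i$. So $\mathbf m\in K_\sigma$ exactly when this count is at least $s_i$ for every $i\in[k]$. This is the defining condition of $\mathcal W_\sigma$.

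For the equivalent formulation, let $(u_1,\dots,u_d)$ be the nondecreasing rearrangement of $w$. At least $s_i$ entries are $\le s_i$ if and only if the $s_i$ smallest entries are all $\le s_i$, that is, if and only if $u_{s_i}\le s_i$.

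No step is a real obstacle. The only point needing care is the index bookkeeping: one must check that the sentinel convention of Section~1, with ascent positions in $[0,m-1]$, matches the convention $w_0:=1$ with $i\in[d]$ used in the statement.
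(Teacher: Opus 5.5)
Your proposal is correct and follows the paper's proof: specialize Corollary~\ref{cor:wr} to $\mathbf r=\mathbf 1$ and $\mathcal G=K_\sigma$, invoke~\eqref{equ:comp}, and identify the trivially colored words in $\mathcal W^{\mathbf 1}_{K_\sigma}$ with the words in $\mathcal W_\sigma$. Your additional checks make explicit what the paper leaves implicit: the index shift between the two ascent conventions, the content-vector reading of $K_\sigma$, the rearrangement equivalence, and the full-dimensionality and integrality of $\mathcal Q_\sigma$.
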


\begin{proof}
Substituting $\mathcal{G}=K_\sigma$ and the trivial color vector
$\mathbf{r}=\mathbf{1}$ into Corollary~\ref{cor:wr}
and combining the resulting identity with formula~\eqref{equ:comp},
we obtain
\[
\frac{\displaystyle\sum_{w^{\mathbf{c}} \in \mathcal{W}_{K_\sigma}^{\mathbf{1}}} x^{\operatorname{asc}(w^{\mathbf{c}})}}{(1-x)^{d+1}}
= \sum_{n \ge 0}\mathcal{L}(\mathcal{Q}_\sigma;n) x^n.
\]
Words equipped with trivial coloring $\mathbf{r}=\mathbf{1}$ correspond bijectively to uncolored words in $\mathcal{W}_\sigma$.
Hence,
\[
\sum_{w^{\mathbf{c}}\in \mathcal{W}_{K_\sigma}^{\mathbf{1}}}x^{\operatorname{asc}(w^{\mathbf{c}})}=\sum_{w\in \mathcal{W}_\sigma}x^{\operatorname{asc}(w)}.
\]
The claim now follows from the definition of the $h^*$-polynomial of a lattice polytope.
\end{proof}

\subsection{Reflexive lattice polytopes defined from preorders}
\hspace*{\parindent}
Let $E$ be a $d$-element set.
A \emph{preorder} $\tau$ on $E$ is a reflexive and transitive relation $\leq_\tau$. An \emph{order ideal} of $\tau$ is a subset $\mathcal{I}\subseteq E$ such that $b\in\mathcal{I}$ and $a\leq_\tau b$ imply $a\in\mathcal{I}$. The \emph{preorder polytope} $\mathcal{Q}_\tau\subseteq\mathbb{R}^E$ is defined by
\[
\mathcal{Q}_\tau=
\left\{
\mathbf{x}\in\mathbb{R}^E
\,\bigg|\,
x_e\geq0\ \text{for all }e\in E,\quad
\sum_{e\in\mathcal{I}}x_e\leq|\mathcal{I}|
\ \text{for every order ideal }\mathcal{I}\text{ of }\tau
\right\}.
\]
Let $\Delta_E$ denote the standard simplex in $\mathbb{R}^E$ and let $\mathbf{1}=\sum_{e\in E}\mathbf{e}_e$. Following Section~6 of~\cite{AC26}, let
\[
R_\tau=\mathcal{Q}_\tau+\Delta_E-\mathbf{1}.
\]
By Equations~(23) and~(24) in~\cite{AC26}, the polytope $R_\tau$ is characterized by the inequalities $x_e\geq-1$ for all $e\in E$ and
$\sum_{e\in\mathcal{I}}x_e\leq1$ for every order ideal $\mathcal{I}$ of $\tau$. Moreover, $R_\tau$ is a reflexive lattice polytope by~\cite[Proposition~6.1]{AC26} and hence its $h^*$-polynomial is palindromic of degree $d$.

Let $1\notin E$ be an auxiliary symbol. Since $R_\tau$ is a lattice translation of $\mathcal{Q}_\tau+\Delta_E$, Equation~(25) and Lemma~4.1 in~\cite{AC26} give
\begin{equation}\label{ehr:Rt}
\mathcal{L}(R_\tau;n)
=
\sum_{\mathbf{m}\in\mathcal{G}_\tau}
\binom{n+m_1}{m_1}
\prod_{e\in E}\binom{n+m_e-1}{m_e},
\end{equation}
where
\[
\mathcal{G}_\tau=
\left\{
(m_1,(m_e)_{e\in E})\in\mathbb{N}^{E\sqcup\{1\}}
\,\bigg|\,
m_1+\sum_{e\in E}m_e=d,\
m_1+\sum_{e\in\mathcal{I}}m_e\geq|\mathcal{I}|
\ \text{for every order ideal }\mathcal{I}\text{ of }\tau
\right\}.
\]
Indeed, Lemma~4.1 gives
\[
\sum_{e\in\mathcal{J}}m_e\leq|\mathcal{J}|
\]
for every order filter $\mathcal{J}$ of $\tau$.
Taking $\mathcal{J}=E\setminus\mathcal{I}$ and using $m_1+\sum_{e\in E}m_e=d$ gives the stated characterization of $\mathcal{G}_\tau$.

Fix an arbitrary total order $<$ on $E$ and extend it to $E\sqcup\{1\}$ by declaring $1< e$ for every $e\in E$.
Given a word $w=w_1w_2\cdots w_d\in(E\sqcup\{1\})^d$,
set $w_0=1$ and let $\operatorname{asc}(w)$ be the number of indices $i\in[0,d-1]$ such that $w_i< w_{i+1}$.

\begin{cor}\label{cor:Q11}
For every preorder $\tau$ on $E$ we have
\[
h^*(R_\tau;x)
=
\sum_{w\in\mathcal{W}_\tau}x^{\operatorname{asc}(w)},
\]
where $\mathcal{W}_\tau$ is the set of such words  $w\in (E\sqcup\{1\})^d$ satisfying
\(
\#\{j\in[d]:w_j\in\mathcal{I}\cup\{1\}\}
\geq|\mathcal{I}|
\)
for every order ideal $\mathcal{I}$ of $\tau$.
\end{cor}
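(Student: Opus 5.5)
The plan is to apply Corollary~\ref{cor:wr} with the trivial color vector $\mathbf{r}=\mathbf{1}$ to the index set $\mathcal{G}=\mathcal{G}_\tau$, after relabelling the alphabet. Choose a bijection between $E\sqcup\{1\}$ and $[d+1]$ that is order-preserving for the fixed total order $<$. Then $1$ goes to the smallest letter (the letter $1$ of Section~1), and the elements of $E$ go to $2,\dots,d+1$ in increasing order. Under this relabelling a vector $(m_1,(m_e)_{e\in E})\in\mathcal{G}_\tau$ becomes a multiplicity vector in $\mathbb{N}^{d+1}$ with coordinate sum $d$, and its first coordinate corresponds to the auxiliary symbol $1$.

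With $\mathbf{r}=\mathbf{1}$, colored words are simply words, and the color order restricts to the chosen total order, with $1$ minimal. The sentinel $w_0=1$ is the sentinel $1^1$ of Section~1. So the ascent statistic in Corollary~\ref{cor:wr} coincides with the $\operatorname{asc}$ defined just before the statement. Corollary~\ref{cor:wr} then gives
\[
\frac{\sum_{w\in\mathcal{W}^{\mathbf{1}}_{\mathcal{G}_\tau}}x^{\operatorname{asc}(w)}}{(1-x)^{d+1}}
=\sum_{n\ge0}\Bigl(\sum_{\mathbf{m}\in\mathcal{G}_\tau}\binom{n+m_1}{m_1}\prod_{e\in E}\binom{n+m_e-1}{m_e}\Bigr)x^n .
\]
The first factor is the one attached to the smallest letter, and that letter is $1$. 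By \eqref{ehr:Rt}, the inner sum on the right is $\mathcal{L}(R_\tau;n)$.

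Next, $R_\tau$ is a $d$-dimensional lattice polytope; it is reflexive, hence full-dimensional in $\mathbb{R}^E$. So the denominator $(1-x)^{d+1}$ is the correct one, and the numerator is $h^*(R_\tau;x)$ by definition.

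It remains to identify $\mathcal{W}^{\mathbf{1}}_{\mathcal{G}_\tau}$ with $\mathcal{W}_\tau$. A word $w\in(E\sqcup\{1\})^d$ has content vector $m_1=\#\{j:w_j=1\}$ and $m_e=\#\{j:w_j=e\}$, with total sum $d$. For an order ideal $\mathcal{I}$, the condition $m_1+\sum_{e\in\mathcal{I}}m_e\ge|\mathcal{I}|$ is exactly $\#\{j\in[d]:w_j\in\mathcal{I}\cup\{1\}\}\ge|\mathcal{I}|$. So $w$ lies in $\mathcal{W}^{\mathbf{1}}_{\mathcal{G}_\tau}$ if and only if it lies in $\mathcal{W}_\tau$.

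The main point needing care is that the proof of Theorem~\ref{thm:Sgf}, as extended in Corollary~\ref{cor:wr}, still works when some multiplicities are zero. In particular it must work when $m_1=0$, and when some letters of $E$ do not appear. This holds because $\binom{n-1}{0}=1$ and $\binom{n}{0}=1$ correctly count zero letters in the barred-permutation construction. Apart from this, the argument only requires checking that the relabelling respects both the sentinel and the ordering.
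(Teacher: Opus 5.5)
Your proposal is correct and is essentially the paper's own proof. You relabel $E\sqcup\{1\}$ order-preservingly as $[d+1]$, apply Corollary~\ref{cor:wr} with $\mathcal{G}=\mathcal{G}_\tau$ and $\mathbf{r}=\mathbf{1}$, identify $\mathcal{W}_\tau$ with $\mathcal{W}^{\mathbf{1}}_{\mathcal{G}_\tau}$ via content vectors, and conclude with \eqref{ehr:Rt}; your extra checks on zero multiplicities and on the full-dimensionality of the reflexive polytope $R_\tau$ (which justifies the denominator $(1-x)^{d+1}$) are details the paper leaves implicit.
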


\begin{proof}
For $w\in(E\sqcup\{1\})^d$, let $m_1$ be the number of appearances of $1$ in $w$ and let $m_e$ be the number of appearances of $e$ for every $e\in E$.
Then,
$w\in\mathcal{W}_\tau$ if and only if $(m_1,(m_e)_{e\in E})\in\mathcal{G}_\tau$.
Relabeling the totally ordered alphabet $E\sqcup\{1\}$ as $[d+1]$ and applying Corollary~\ref{cor:wr} with $\mathcal{G}=\mathcal{G}_\tau$ and the trivial color vector $\mathbf{r}=\mathbf{1}$, we obtain
\[
\frac{\displaystyle\sum_{w\in\mathcal{W}_\tau}
x^{\operatorname{asc}(w)}}{(1-x)^{d+1}}
=
\sum_{n\geq0}
\left(
\sum_{\mathbf{m}\in\mathcal{G}_\tau}
\binom{n+m_1}{m_1}
\prod_{e\in E}\binom{n+m_e-1}{m_e}
\right)x^n.
\]
By~\eqref{ehr:Rt}, the right-hand side is equal to
\[
\sum_{n\geq0}\mathcal{L}(R_\tau;n)x^n
=
\frac{h^*(R_\tau;x)}{(1-x)^{d+1}}
\]
and the proof follows.
\end{proof}

\medskip
\noindent
\textbf{Acknowledgements}. The author acknowledges support from the China Scholarship Council
(no.~202508370093) for her visit to the Department of Mathematics of
the National and Kapodistrian University of Athens during the academic
year 2025--26.
The author is especially grateful to Christos A.~Athanasiadis
for suggesting the problems studied in this paper,
for many helpful discussions,
and for his valuable comments and suggestions on earlier versions of this paper.
The author is also grateful to Danai Deligeorgaki for her helpful comments and suggestions.


\begin{thebibliography}{99}

\bibitem{Ath18}
C. A. Athanasiadis,
Gamma-positivity in combinatorics and geometry,
S\'em. Lothar. Combin.
77 (2018), Art.~B77i, 64 pp.

\bibitem{Ath26}
C. A. Athanasiadis,
Lattice point enumeration of polytopes associated to integer compositions,
Ann. Comb.,
published online, 2026.
doi:10.1007/s00026-026-00812-2.

\bibitem{AC26}
C. A. Athanasiadis,
F. Chapoton,
Polytopes and posets associated to preorders,
arXiv:2605.26916.

\bibitem{AFM26}
N. Avila, L. Ferroni and A. H. Morales,
Luck and magic for Pitman--Stanley polytopes,
extended abstract, Proceedings of FPSAC 2026, 12 pp.

\bibitem{Bra06}
P. Br\"and\'en,
On linear transformations preserving the P\'olya frequency property,
Trans. Amer. Math. Soc.
358(8) (2006), 3697--3716.

\bibitem{Bra15}
P. Br\"and\'en,
Unimodality, log-concavity, real-rootedness and beyond,
Handbook of Enumerative Combinatorics,
CRC Press, Boca Raton, FL, 2015, pp.~437--483.

\bibitem{BR07}
M. Beck,
S. Robins,
Computing the Continuous Discretely:
Integer-Point Enumeration in Polyhedra,
Undergrad. Texts Math.,
Springer, New York, 2007.

\bibitem{BS21}
P. Br\"and\'en,
L. Solus,
Symmetric decompositions and real-rootedness,
Int. Math. Res. Not. IMRN
2021(10) (2021), 7764--7798.

\bibitem{Com74}
L. Comtet,
Advanced Combinatorics,
Reidel, Dordrecht, 1974.

\bibitem{DHS26}
D. Deligeorgaki,
B. Han,
L. Solus,
Colored multiset Eulerian polynomials,
Combin. Theory
6(1) (2026), \#10.

\bibitem{DBZ24}
M.-J. Ding,
B.-X. Zhu,
Real stable polynomials and the alternatingly increasing property,
European J. Combin.
120 (2024), 103944.

\bibitem{FS70}
D. Foata,
M.-P. Sch\"utzenberger,
Th\'eorie g\'eometrique des polyn\^{o}mes eul\'eriens,
Lecture Notes in Math.,
Vol.~138,
Springer, Berlin--New York, 1970.

\bibitem{Fro10}
G. Frobenius,
\"Uber die Bernoullischen Zahlen und die Eulerschen Polynome,
Sitzungsber. Preuss. Akad. Wiss. Berlin
(1910), 809--847.

\bibitem{Gal05}
S. R. Gal,
Real root conjecture fails for five- and higher-dimensional spheres,
Discrete Comput. Geom.
34 (2005), 269--284.

\bibitem{GS78}
I. M. Gessel and R. P. Stanley,
Stirling polynomials,
J. Combin. Theory Ser. A 24 (1978), 24--33.

\bibitem{Li12}
N. Li,
Ehrhart $h^*$-vectors of hypersimplices,
Discrete Comput. Geom.
48(4) (2012), 847--878.
\bibitem{Lin15}

Z. Lin,
On the descent polynomial of signed multipermutations,
Proc. Amer. Math. Soc.
143(9) (2015), 3671--3685.

\bibitem{LW07}
L. L. Liu,
Y. Wang,
A unified approach to polynomial sequences with only real zeros,
Adv. Appl. Math.
38 (2007), 542--560.


\bibitem{Mac04}
P. A. MacMahon,
Combinatory Analysis,
Vol.~II,
Dover Publications, Mineola, NY, 2004.

\bibitem{Pet15}
T. K. Petersen,
Eulerian Numbers,
Birkh\"auser, Basel, 2015.

\bibitem{PS02}
J. Pitman,
R. P. Stanley,
A polytope related to empirical distributions, plane trees, parking functions, and the associahedron,
Discrete Comput. Geom.
27(4) (2002), 603--634.


\bibitem{Sim84}
R. Simion,
A multi-indexed Sturm sequence of polynomials and unimodality of certain combinatorial sequences,
J. Combin. Theory Ser. A
36(1) (1984), 15--22.

\bibitem{Sta89}
R. P. Stanley,
Log-concave and unimodal sequences in algebra, combinatorics, and geometry,
in: Graph Theory and Its Applications: East and West,
Ann. New York Acad. Sci.
576 (1989), 500--535.


\bibitem{Ste92}
E. Steingr\'{\i}msson,
Permutations statistics of indexed and poset permutations,
Thesis (Ph.D.)-Massachusetts Institute of Technology, ProQuest LLC, Ann Arbor, MI, 1992.

\bibitem{Ste94}
E. Steingr\'imsson,
Permutation statistics of indexed permutations,
European J. Combin.
15(2) (1994), 187--205.


\bibitem{Tie26}
E. Tielker,
Weighted Ehrhart series and a type-$B$ analogue of a formula of MacMahon,
Discrete Comput. Geom.
75 (2026), 205--240.

\end{thebibliography}
\end{document}